\renewcommand*{\backref}[1]{}
\renewcommand*{\backrefalt}[4]{%
    \ifcase #1 (Not cited.)%
    \or        (p.\,#2)%
    \else      (pp.\,#2)%
    \fi}
\begin{document}

\newtheorem{theorem}{Theorem}
\newtheorem{lemma}[theorem]{Lemma}
\newtheorem{example}[theorem]{Example}
\newtheorem{algol}{Algorithm}
\newtheorem{corollary}[theorem]{Corollary}
\newtheorem{prop}[theorem]{Proposition}
\newtheorem{definition}[theorem]{Definition}
\newtheorem{question}[theorem]{Question}
\newtheorem{problem}[theorem]{Problem}
\newtheorem{remark}[theorem]{Remark}
\newtheorem{conjecture}[theorem]{Conjecture}

\newcommand{\commM}[1]{\marginpar{%
\begin{color}{red}
\vskip-\baselineskip 
\raggedright\footnotesize
\itshape\hrule \smallskip M: #1\par\smallskip\hrule\end{color}}}

\newcommand{\commA}[1]{\marginpar{%
\begin{color}{blue}
\vskip-\baselineskip 
\raggedright\footnotesize
\itshape\hrule \smallskip A: #1\par\smallskip\hrule\end{color}}}
\def\xxx{\vskip5pt\hrule\vskip5pt}


\def\cA{{\mathcal A}}
\def\cB{{\mathcal B}}
\def\cC{{\mathcal C}}
\def\cD{{\mathcal D}}
\def\cE{{\mathcal E}}
\def\cF{{\mathcal F}}
\def\cG{{\mathcal G}}
\def\cH{{\mathcal H}}
\def\cI{{\mathcal I}}
\def\cJ{{\mathcal J}}
\def\cK{{\mathcal K}}
\def\cL{{\mathcal L}}
\def\cM{{\mathcal M}}
\def\cN{{\mathcal N}}
\def\cO{{\mathcal O}}
\def\cP{{\mathcal P}}
\def\cQ{{\mathcal Q}}
\def\cR{{\mathcal R}}
\def\cS{{\mathcal S}}
\def\cT{{\mathcal T}}
\def\cU{{\mathcal U}}
\def\cV{{\mathcal V}}
\def\cW{{\mathcal W}}
\def\cX{{\mathcal X}}
\def\cY{{\mathcal Y}}
\def\cZ{{\mathcal Z}}

\def\C{\mathbb{C}}
\def\F{\mathbb{F}}
\def\K{\mathbb{K}}
\def\G{\mathbb{G}}
\def\Z{\mathbb{Z}}
\def\R{\mathbb{R}}
\def\Q{\mathbb{Q}}
\def\N{\mathbb{N}}
\def\M{\textsf{M}}
\def\U{\mathbb{U}}
\def\P{\mathbb{P}}
\def\A{\mathbb{A}}
\def\p{\mathfrak{p}}
\def\n{\mathfrak{n}}
\def\X{\mathcal{X}}
\def\x{\textrm{\bf x}}
\def\w{\textrm{\bf w}}
\def\ovQ{\overline{\Q}}
\def\rank#1{\mathrm{rank}#1}
\def\wf{\widetilde{f}}
\def\wg{\widetilde{g}}
\def\comp{\hskip -2.5pt \circ  \hskip -2.5pt}
\def\({\left(}
\def\){\right)}
\def\[{\left[}
\def\]{\right]}
\def\<{\langle}
\def\>{\rangle}

\def\gen#1{{\left\langle#1\right\rangle}}
\def\genp#1{{\left\langle#1\right\rangle}_p}
\def\genPs{{\left\langle P_1, \ldots, P_s\right\rangle}}
\def\genPsp{{\left\langle P_1, \ldots, P_s\right\rangle}_p}

\def\e{e}

\def\eq{\e_q}
\def\fh{{\mathfrak h}}

\def\lcm{{\mathrm{lcm}}\,}

\def\l({\left(}
\def\r){\right)}
\def\fl#1{\left\lfloor#1\right\rfloor}
\def\rf#1{\left\lceil#1\right\rceil}
\def\mand{\qquad\mbox{and}\qquad}

\def\jt{\tilde\jmath}
\def\ellmax{\ell_{\rm max}}
\def\llog{\log\log}

\def\m{{\rm m}}
\def\ch{\hat{h}}
\def\GL{{\rm GL}}
\def\Orb{\mathrm{Orb}}
\def\Per{\mathrm{Per}}
\def\Preper{\mathrm{Preper}}
\def \S{\mathcal{S}}
\def\vec#1{\mathbf{#1}}
\def\ov#1{{\overline{#1}}}
\def\Gal{{\rm Gal}}

\newcommand{\bfalpha}{{\boldsymbol{\alpha}}}
\newcommand{\bfomega}{{\boldsymbol{\omega}}}

\newcommand{\Ch}{{\operatorname{Ch}}}
\newcommand{\Elim}{{\operatorname{Elim}}}
\newcommand{\proj}{{\operatorname{proj}}}
\newcommand{\h}{{\operatorname{h}}}

\newcommand{\hh}{\mathrm{h}}
\newcommand{\aff}{\mathrm{aff}}
\newcommand{\Spec}{{\operatorname{Spec}}}
\newcommand{\Res}{{\operatorname{Res}}}

\numberwithin{equation}{section}
\numberwithin{theorem}{section}

\def\house#1{{%
    \setbox0=\hbox{$#1$}
    \vrule height \dimexpr\ht0+1.4pt width .5pt depth \dimexpr\dp0+.8pt\relax
    \vrule height \dimexpr\ht0+1.4pt width \dimexpr\wd0+2pt depth \dimexpr-\ht0-1pt\relax
    \llap{$#1$\kern1pt}
    \vrule height \dimexpr\ht0+1.4pt width .5pt depth \dimexpr\dp0+.8pt\relax}}

\newcommand{\Address}{{
\bigskip
\footnotesize
\textsc{Centro di Ricerca Matematica Ennio De Giorgi, Scuola Normale Superiore, Pisa, 56126, Italy}\par\nopagebreak
\textit{E-mail address:} \texttt{marley.young@sns.it}
}}

\title[]
{On multiplicatively dependent vectors of polynomial values}

\author[M. Young] {Marley Young}

\subjclass[2020]{11N25, 11C08, 11R04}

\begin{abstract}
Given polynomials $f_1,\ldots,f_n$ in $m$ variables with integral coefficients, we give upper bounds for the number of integral $m$-tuples $\bm u_1,\ldots, \bm u_n$ of bounded height such that $f_1(\bm u_1), \ldots, f_n(\bm u_n)$ are multiplicatively dependent. We also prove, under certain conditions, a finiteness result for $\bm u \in \Z^m$ with relatively prime entries such that $f_1(\bm u),\ldots,f_n(\bm u)$ are multiplicatively dependent.
\end{abstract}

\maketitle

\section{Introduction}

We say that $n$ non-zero complex numbers $\nu_1, \ldots, \nu_n$ are \emph{multiplicatively dependent} if if there is a non-zero vector $(k_1,\ldots,k_n) \in \Z^n$ for which
\begin{equation} \label{eq:MultDep}
\nu_1^{k_1} \cdots \nu_n^{k_n} = 1.
\end{equation}
Otherwise we say they are \emph{multiplicatively independent}. Consequently, a point in the complex space $\C^n$ is called \emph{multiplicatively dependent} if its coordinates are all non-zero and multiplicatively dependent.

Multiplicative dependence of algebraic numbers is a deep and long-studied topic, see for example \cite{BBGMOS2,BOSS,BMZ,OSSZ1,OSSZ,PSSS,PL}.

In \cite{BMZ}, Bombieri, Masser and Zannier studied intersections of geometrically irreducible algebraic curves with proper algebraic subgroups of the multiplicative group $\G_{\mathrm{m}}^n$. Since such subgroups of $\G_{\mathrm{m}}^n$ are defined by finite sets of equations of the form $X^{k_1} \cdots X^{k_n}=1$ (see \cite[Corollary~3.2.15]{BG}), the paper \cite{BMZ} really concerns multiplicative dependence of points on a curve $\cX \subset \G_{\mathrm{m}}^n$. If $\cX$ is not contained in any translate of a proper algebraic subgroup of $\G_{\mathrm{m}}^n$, then the multiplicatively dependent points on $\cX(\overline \Q)$ form a set of bounded Weil height \cite[Theorem~1]{BMZ}.

Additionally, in \cite{OSSZ1}, for the maximal abelian extension $K^{\mathrm{ab}}$ of a number field $K$, the authors established the structure of multiplicatively dependent points on $\cX(K^{\mathrm{ab}})$. This in particular led to \emph{finiteness} results for such points, with the genus zero case, which equivalently concerns multiplicatively dependent values of rational functions, treated separately in \cite{OSSZ}.

\begin{theorem}\cite[Theorem~4.2]{OSSZ} \label{thm:OSSZ}
Let $F=(f_1,\ldots,f_n) \in K(X)^n$, whose components cannot multiplicatively generate a power of a linear fractional transformation. Then there are only finitely many elements $\alpha \in K^{\mathrm{ab}}$ such that $F(\alpha)$ is multiplicatively dependent.
\end{theorem}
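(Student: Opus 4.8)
The plan is to first bound the Weil height of the relevant $\alpha$ by a Bombieri--Masser--Zannier argument, and then to deduce finiteness from the arithmetic of $K^{\mathrm{ab}}$; the hypothesis on $F$ will enter precisely in ruling out the degenerate configurations. To begin, the assumption that $f_1,\dots,f_n$ cannot multiplicatively generate a power of a linear fractional transformation implies in particular that no non-trivial monomial $g_{\mathbf{k}}:=f_1^{k_1}\cdots f_n^{k_n}$ (with $\mathbf{k}\in\Z^n\setminus\{\mathbf{0}\}$) is constant, so that the geometrically irreducible curve $\mathcal X\subseteq\G_{\mathrm{m}}^n$ parametrised by $t\mapsto(f_1(t),\dots,f_n(t))$ is not contained in any translate of a proper algebraic subgroup of $\G_{\mathrm{m}}^n$. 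By \cite[Theorem~1]{BMZ} (applied over $\overline{K}=\overline{\Q}$, since $K$ is a number field), the multiplicatively dependent points of $\mathcal X(\overline{\Q})$ have Weil height at most a constant $c_1=c_1(F)$. Hence for every $\alpha\in K^{\mathrm{ab}}$ lying outside the finitely many zeros and poles of the $f_i$ and with $F(\alpha)$ multiplicatively dependent, each $f_i(\alpha)$ has height at most $c_1$; fixing one non-constant $f_i$, of degree $d\ge1$, the behaviour of the height under the degree-$d$ self-map of $\P^1$ induced by $f_i$ gives $\h(f_i(\alpha))=d\,\h(\alpha)+O_F(1)$, whence $\h(\alpha)\le c_2$ for some $c_2=c_2(F)$.

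Next I would dispose of those $\alpha$ that are roots of unity. If $\alpha^N=1$ and $g_{\mathbf{k}}(\alpha)=1$ with $\mathbf{k}\ne\mathbf{0}$, then the point $(\alpha,f_1(\alpha),\dots,f_n(\alpha))$ of the curve $\widetilde{\mathcal X}\subseteq\G_{\mathrm{m}}^{\,n+1}$ parametrised by $t\mapsto(t,f_1(t),\dots,f_n(t))$ lies in the algebraic subgroup cut out by $x_0^N=1$ and $x_1^{k_1}\cdots x_n^{k_n}=1$, which has codimension $2$. A computation like the one above --- now also using the exclusion of powers of the linear fractional transformation $X$ --- shows that $\widetilde{\mathcal X}$ is likewise not contained in any translate of a proper algebraic subgroup of $\G_{\mathrm{m}}^{\,n+1}$. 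Since the intersection of such a curve with the union of all algebraic subgroups of codimension at least $2$ is finite (a companion theorem of Bombieri, Masser and Zannier), only finitely many roots of unity $\alpha$ can arise.

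It remains to treat $\alpha\in K^{\mathrm{ab}}$ that are not roots of unity, and this I expect to be the main obstacle. By the relative Dobrowolski-type lower bound of Amoroso and Zannier --- the Bogomolov property of $K^{\mathrm{ab}}$ over $K$ --- any such $\alpha$ satisfies $\h(\alpha)\ge c(K)>0$, so by the first step it lies in the fixed range $c(K)\le\h(\alpha)\le c_2$. Over the non-Northcott field $K^{\mathrm{ab}}$ this height bound alone does not suffice, so one must exploit the abelian structure. Fixing for each such $\alpha$ a relation $g_{\mathbf{k}}(\alpha)=1$ and using that $g_{\mathbf{k}}\in K(X)$ while $K^{\mathrm{ab}}/K$ is Galois, all $K$-conjugates of $\alpha$ again lie in $K^{\mathrm{ab}}$ and satisfy the same equation, so that $[K(\alpha):K]\le\deg g_{\mathbf{k}}$; the task thus reduces to bounding $\mathbf{k}$ (equivalently $[K(\alpha):K]$), after which Northcott's theorem concludes. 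Carrying this out appears to require the relative Bogomolov bounds together with a careful analysis of the multiplicative relation and of the cyclotomic subfields of $K(\alpha)$, and it is exactly here --- in excluding coordinatewise linear-fractional maps and the long relations that large roots of unity in $K(\alpha)$ would force --- that the hypothesis ruling out powers of linear fractional transformations is indispensable. Alternatively, this step can be quoted directly from the structure theorem for multiplicatively dependent points on curves over $K^{\mathrm{ab}}$ of \cite{OSSZ1}, once one checks that in genus zero its exceptional configurations reduce to exactly the condition that $f_1,\dots,f_n$ multiplicatively generate a power of a linear fractional transformation. Finally, restoring the finitely many zeros and poles of the $f_i$ and noting that each value $F(\alpha)$ has at most $\max_i\deg f_i$ preimages gives the asserted finiteness.
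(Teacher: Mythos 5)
This statement is not proved in the paper at all: it is quoted as background, with the proof deferred entirely to \cite[Theorem~4.2]{OSSZ}, so there is no internal argument to compare yours with; I can only assess your sketch on its own terms. As a standalone proof it has a genuine gap at exactly the point you yourself flag as the main obstacle. Bounding $\h(\alpha)$ via \cite[Theorem~1]{BMZ} cannot finish the job, because $K^{\mathrm{ab}}$ has no Northcott property: bounded height together with the Amoroso--Zannier (relative Bogomolov) lower bound confines $\h(\alpha)$ to a fixed interval but still leaves infinitely many candidates. Your proposed remedy --- bound the exponent vector $\bm k$, hence $[K(\alpha):K]$, then apply Northcott --- correctly identifies the crux, but it is never carried out: nothing in the sketch controls $\bm k$, and the fallback of quoting the structure theorem of \cite{OSSZ1} ``once one checks that its genus-zero exceptional configurations reduce to the linear-fractional condition'' is precisely the content of the proof in \cite{OSSZ}, not a step you have supplied. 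So your third paragraph is a plan rather than a proof, and the theorem is not established by the proposal.

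A secondary gap: your first step deduces from the hypothesis that no nontrivial power product $f_1^{k_1}\cdots f_n^{k_n}$ is constant, so that the parametrised curve is not contained in any translate of a proper algebraic subgroup of $\G_{\mathrm{m}}^n$, as \cite[Theorem~1]{BMZ} requires. But a nonzero constant is not a power of a linear fractional transformation, so the hypothesis as stated does not exclude constant power products: for $F=\left(2(X^2+1),\,X^2+1\right)$ one has $f_1/f_2=2$, the associated curve lies in a translate of a proper subgroup, yet no power product is a power of a linear fractional map. The same issue recurs for your auxiliary curve $\widetilde{\mathcal X}\subseteq\G_{\mathrm{m}}^{\,n+1}$ in the root-of-unity step, where the stated hypothesis rules out products equal to $X^{k}$ but not to $cX^{k}$ with $c$ a nontrivial constant. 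To repair this you need either the precise formulation of the hypothesis used in \cite{OSSZ} (which is designed to handle such degenerate configurations) or an additional argument covering curves contained in non-torsion translates of proper subgroups.
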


This has implications for results in \emph{arithmetic dynamics} (for example \cite[Theorems~4.5~and~4.11]{OSSZ}), and these results have since been extended to hold modulo (approximate) finitely generated groups, see \cite{BBGMOS2,BOSS}. Mello \cite{Mello}, has proved analogues of the results of \cite{OSSZ1} in higher dimensions. It would be of interest to see if there is also a non-density result for rational functions in several variables analogous to Theorem~\ref{thm:OSSZ}. In a slightly different direction, we prove a finiteness result in the case of certain polynomials at integer points (see Proposition~\ref{prop:gcdFinite} below).

One can also consider questions of \emph{arithmetic statistics} in the context of multiplicative dependence. In \cite{PSSS}, the authors prove several asymptotic formulas for the number of multiplicatively dependent vectors of algebraic numbers or algebraic integers of fixed degree (or lying in a fixed number field) and bounded height. Bounds for the number of multiplicatively dependent matrices with integer entries of bounded size have also recently been obtained \cite{OS}.

As the main focus of this paper, we will in a similar vein count multiplicatively dependent polynomial or rational function values under certain constraints. For a subset $S \subseteq \C^m$, and an $n$-tuple of rational functions $F = (f_1, \ldots, f_n) \in \C(X_1,\ldots,X_m)^n$, denote by $N_{F}(S)$ the number of $n$-tuples $(\bm u_1,\ldots, \bm u_n) \in S^n$ such that $(f_1(\bm{u}_1),\ldots,f_n(\bm{u}_n))$ is multiplicatively dependent. Moreover, denote by $N_F^*(S)$ the number of $\bm u \in S$ such that $(f_1(\bm u),\ldots,f_n(\bm u))$ is multiplicatively dependent. We will look particularly at the case where $F$ consists of polynomials with integral coefficients and $S = [-H,H]^m = \{x \in \Z : |x| \leq H \}^m$ for some $H > 0$ (or more generally algebraic integers of bounded height in a number field). We note in the following examples that $N_F([-H,H])$ can range from zero to quite large depending on the choice of $F$.

\begin{example}
Let $p_i$ denote the $i$-th prime, let $n \geq 2$ and let $M=p_n$. Let $g(X) := X(X-1) \cdots (X-M)$, for $1 \leq i \leq n$, define $f_i(X) := g(X)+ p_i$, and let $F=(f_1,\ldots,f_n)$. By construction, for any $u \in \Z$ and $1 \leq j \leq n$, $g(X)$ has a factor of $X-(u\mod{p_j})$, and so $g(u) \equiv 0 \pmod{p_j}$. Thus, for any $(u_1,\ldots,u_n) \in \Z^n$ and $1 \leq i \neq j \leq n$, we have $p_i \mid f_i(u_i)$ and $p_i \nmid f_j(u_j)$. That is, $(f_1(u_1),\ldots,f_n(u_n))$ is never multiplicatively dependent, and so $N_F(\Z)=0$.
\end{example}

\begin{example}
Let $F=(f_1,\ldots,f_n) \in \Z[X_1,\ldots,X_m]^n$ be such that $f_i$ and $f_j$ are multiplicatively dependent for some $i \neq j$. Then $(f_1(\bm{u}_1),\ldots,f_n(\bm{u}_j))$ is multiplicatively dependent for any $\bm{u}_1,\ldots,\bm{u}_n \in \Z^m$ with $\bm{u}_i=\bm{u}_j$. Hence for $H \geq 0$, $N_F([-H,H]^m) \gg H^{m(n-1)}$. If more generally we have that for some $i \neq j$ there exist $\bm{u}_i, \bm{u}_j \in \Z^m$ such that $f_i(\bm{u}_i)$ and $f_j(\bm{u}_j)$ are multiplicatively dependent, then fixing such $\bm{u}_i$, $\bm{u}_j$ gives $N_F([-H,H])^m \gg H^{m(n-2)}$.
\end{example}

\begin{example}
Take $f_i(X)=X$ for $i=1,\ldots,n$ and let $F=(f_1,\ldots,f_n) \in \Z[X]^n$. Then $N_F([-H,H])$ is just the number of multiplicatively dependent integral vectors whose coordinates have height at most $H$. Thus by \cite[Equation~(1.16)]{PSSS},
$$
N_F([-H,H]) = n(n+1)(2H)^{n-1} + O \left( H^{n-2} \exp(c_0(n) \log H/\log \log H) \right),
$$
for some positive constant $c_0(n)$.
\end{example}

\subsection{Conventions and notation} We use the Landau symbols $O$ and $o$ and the Vinogradov symbol $\ll$. Above, and throughout the rest of the paper, the assertions $U=O(V)$ and $U \ll V$ are both equivalent to the inequality $|U| \leq cV$ with some positive constant $c$, while $U=o(V)$ means that $U/V \to 0$.

For $\bm{\nu} \in (\C^\times)^n$, we define the \emph{multiplicative rank}, $s$, of $\bm{\nu}$, in the following way. If $\bm{\nu}$ has a coordinate which is a root of unity, we put $s=0$; otherwise let $s$ be the largest integer with $1 \leq s \leq n$ for which any $s$ coordinates of $\bm{\nu}$ form a multiplicatively independent vector. Note that $0 \leq s \leq n-1$ whenever $\bm{\nu}$ is multiplicatively dependent. For $0 \leq s < n$, we define $N_{F,s}(S) \subset N_{F}(S)$ to consist of those elements $(\bm u_1,\ldots, \bm u_n)$ of $N_{F}(S)$ such that $(f_1(\bm u_1), \ldots, f_n(\bm u_n))$ is multiplicatively dependent of rank $s$. Then
\begin{equation} \label{eq:rankDecomp}
N_{F}(S) = N_{F,0}(S) + \ldots + N_{F,n-1}(S).
\end{equation}

For any algebraic number $\alpha$, let
$$
p(X)=a_d X^d + \cdots + a_1 X + a_0
$$
be the minimal polynomial of $\alpha$ over the integers, factored over $\C$ as
$$
p(x)=a_d(x-\alpha_1) \cdots (x-\alpha_d).
$$
The \emph{naive height} $\mathrm{H}_0(\alpha)$ of $\alpha$ is given by
$$
\mathrm{H}_0(\alpha) = \max \{ |a_d|, \ldots, |a_1|,|a_0| \},
$$
and the \emph{absolute Weil height} (or simply \emph{height}), $\mathrm{H}(\alpha)$ of $\alpha$ is defined by
$$
\mathrm{H}(\alpha) = \left( a_d \prod_{i=1}^d \max \{ 1, |\alpha_i| \} \right)^{1/d}.
$$
We will always assume that a bound $H > 0$ on the height of points is sufficiently large such that the logarithmic expressions $\log H$ and $\log \log  H$ are well-defined.

Let $K$ be a global field and let $f \in K[X_1,\ldots,X_m]$. For an integer $k \geq 1$, we say that $f$ is \emph{$k$-irreducible over $K$} if $f$ does not have any factors of degree at most $k$ over $K$. In Section~\ref{sec:Stat}, we will make use of dimension growth bounds for the number of integral points of bounded height on an affine hypersurface defined by a polynomial $f$ of degree $d \geq 2$. Such results typically assume that the degree $d$ part $f_d$ of $f$ is absolutely irreducible, but this requirement has recently been loosened by Cluckers et al. \cite{CDHNV}, who only assume $1$-irreducibility together with the following condition, which means that $f$ depends non-trivially on at least three variables in any affine coordinate system over the base field $K$.

\begin{definition} \label{def:NCC}
Let $K$ be a global field, let $f \in K[X_1,\ldots,X_m]$ with $m \geq 3$, and let $X=V(f)$ be the affine hypersurface cut out by $f$. We say that $f$ is \emph{not cylindrical over a curve (NCC) over $K$} if there does not exist a $K$-linear map $\ell : \A_K^n \to \A_K^2$ and a curve $C$ in $\A_K^2$ such that $X = \ell^{-1}(C)$. For convenience we will say that any $f$ is NCC if $m \leq 2$.
\end{definition}

Note that $f$ is cylindrical over a curve if and only if there exist a polynomial $g \in K[Y_1,Y_2]$ and linear forms $\ell_1(\bm X)$, $\ell_2(\bm X)$ over $K$ such that $f(\bm X) = g(\ell_1(\bm X), \ell_2(\bm X))$.

\subsection{Statement of results} Firstly, as mentioned above, we can achieve a finiteness result for $N_F^*$, under certain conditions, when we restrict to integral points.

\begin{prop} \label{prop:gcdFinite}
Let $F=(f_1,\ldots,f_n) \in \Z[X_1,\ldots,X_m]^n$ and $S \subseteq \Z^m$ be such that 
\begin{itemize}
\item pairwise the $f_i$ have no common zeros in $\C^m$;
\item for each $1 \leq i \leq n$, $f_i(\bm u)= \pm 1$ has only finitely many solutions $\bm u \in S$; and
\item for all but at most one $1 \leq i \leq n$, $P^+(f_i(\bm u)) \to \infty$ as $\max_{1 \leq i \leq m} |u_i| \to \infty$ for $\bm u = (u_1,\ldots,u_m) \in S$, where $P^+(x)$ denotes the largest prime factor of $x \in \Z$.
\end{itemize}
Then $N_F^*(S)$ is finite.
\end{prop}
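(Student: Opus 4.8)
The plan is to combine the three hypotheses as follows. Hypothesis~(1) will force the values $f_i(\bm u)$ and $f_j(\bm u)$ to be coprime up to a bounded factor; hypothesis~(3) will then supply, for all but one index $i$, a prime dividing $f_i(\bm u)$ but none of the other values $f_j(\bm u)$, which kills the exponent $k_i$ in any multiplicative relation $\prod_l f_l(\bm u)^{k_l}=1$; and hypothesis~(2) will dispose of the one remaining index. In spirit this is a uniform version of the prime-engineering behind the first example above.

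First I would make hypothesis~(1) effective. Since $f_i$ and $f_j$ have no common zero in $\C^m$ for $i\neq j$, Hilbert's Nullstellensatz gives $g_{ij},h_{ij}\in\C[X_1,\ldots,X_m]$ with $g_{ij}f_i+h_{ij}f_j=1$; clearing denominators yields $G_{ij},H_{ij}\in\Z[X_1,\ldots,X_m]$ and a nonzero integer $c_{ij}$ with $G_{ij}f_i+H_{ij}f_j=c_{ij}$. Putting $C:=\max_{i\neq j}|c_{ij}|\ge 1$, it follows that $\gcd(f_i(\bm u),f_j(\bm u))\mid c_{ij}$, and in particular $\gcd(f_i(\bm u),f_j(\bm u))\le C$ for all $\bm u\in\Z^m$ and all $i\neq j$.

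Next, reindex the $f_i$ so that hypothesis~(3) holds for $i=2,\ldots,n$, taking $i=1$ to be the possible exceptional index (the other two hypotheses are symmetric in the indices, so this is harmless). By~(3) there is a constant $R>0$ such that $\max_{1\le j\le m}|u_j|>R$ forces $P^+(f_i(\bm u))>C$ for every $i\in\{2,\ldots,n\}$ (so in particular $|f_i(\bm u)|\ge 2$ there). Suppose $\bm u\in S$ satisfies $\max_j|u_j|>R$ and $(f_1(\bm u),\ldots,f_n(\bm u))$ is multiplicatively dependent; then all $f_l(\bm u)$ are nonzero and $\prod_{l=1}^n f_l(\bm u)^{k_l}=1$ for some nonzero $(k_1,\ldots,k_n)\in\Z^n$. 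Fix $i\in\{2,\ldots,n\}$ and set $q:=P^+(f_i(\bm u))>C$. Then $q\mid f_i(\bm u)$, while for $j\neq i$ we cannot have $q\mid f_j(\bm u)$, since that would give $q\mid\gcd(f_i(\bm u),f_j(\bm u))\le C<q$. Reading off the $q$-adic valuation of $\prod_l f_l(\bm u)^{k_l}=1$ then yields $k_i\cdot v_q(f_i(\bm u))=0$, hence $k_i=0$. Letting $i$ run over $\{2,\ldots,n\}$ gives $k_2=\cdots=k_n=0$, so $k_1\neq 0$ and $f_1(\bm u)^{k_1}=1$; as $f_1(\bm u)\in\Z$, this means $f_1(\bm u)=\pm1$.

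Putting the pieces together, every $\bm u\in S$ for which $(f_1(\bm u),\ldots,f_n(\bm u))$ is multiplicatively dependent lies in the union of $\{\bm u\in S:\max_j|u_j|\le R\}$ and $\{\bm u\in S:f_1(\bm u)=\pm1\}$. The first set is finite since $S\subseteq\Z^m$, and the second is finite by hypothesis~(2); hence $N_F^*(S)$ is finite. The two steps needing a little care are the effectivization of~(1) via the Nullstellensatz, which is routine, and the bookkeeping around the degenerate values $f_i(\bm u)=\pm1$ (where $P^+$ is not meaningful), which is precisely what hypothesis~(2) is there to absorb; beyond these I do not expect a serious obstacle.
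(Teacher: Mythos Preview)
Your proof is correct and follows essentially the same route as the paper: bound $\gcd(f_i(\bm u),f_j(\bm u))$ uniformly, then use hypothesis~(3) to produce a prime dividing $f_i(\bm u)$ alone, forcing the corresponding exponent to vanish. The paper packages your Nullstellensatz step as a citation to Bodin--D\`ebes (and explicitly remarks that a direct Nullstellensatz argument works), and runs the endgame slightly differently---it first discards all $\bm u$ with \emph{some} $f_i(\bm u)=\pm1$ via hypothesis~(2), then picks a single index $i$ with $k_i\neq 0$ satisfying the $P^+$ condition---but the substance is the same.
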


The last condition on the growth of the largest prime factor of a polynomial value is satisfied (when $S$ is the set of relatively prime $m$-tuples of integers) by
many classes of polynomials, including for example binary forms with at least three distinct linear factors, discriminant forms and index forms, and a large class of norm forms (see \cite{G}).


Given a multiplicatively dependent vector $\bm{\nu}$ of algebraic numbers, it follows from work of Loxton, van der Poorten \cite{PL} and various others, that there is a relation of the form \eqref{eq:MultDep} with a non-zero vector $\bm{k}$ with small coordinates. In the case where $\bm{\nu}$ consists of rational function values, we can use this together with an elementary argument to obtain the following bound.

\begin{prop} \label{prop:NaiveCount}
Let $F=( f_1,\ldots,f_n ) \in \overline{\Q}(X_1,\ldots,X_m)^n$, let $H \geq 0$, and let $S$ be a finite subset of $\{ x \in \overline{\Q} \mid \mathrm{H}(x) \leq H \}$. Then $N_F(S^m) \ll |S|^{mn-1} (\log H)^{n^2-1}$. 
\end{prop}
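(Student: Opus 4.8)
The plan is to count the multiplicatively dependent $n$-tuples by first controlling, for a single multiplicatively dependent vector of algebraic numbers, the size of the exponent vector witnessing the dependence, and then summing over the possible exponent vectors. Recall the Loxton--van der Poorten type result: if $\bm\nu = (\nu_1,\ldots,\nu_n)$ is multiplicatively dependent with all $\nu_i$ of height at most $B$ (and lying in a number field of bounded degree, or more simply with the $\nu_i$ algebraic of bounded height), then there is a non-zero $\bm k \in \Z^n$ with $\nu_1^{k_1}\cdots\nu_n^{k_n}=1$ and $\max_i |k_i| \ll (\log B)^{n-1}$, the implied constant depending only on $n$ and the degree. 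In our situation each coordinate is a value $f_i(\bm u_i)$ with $\bm u_i \in S$, and since $S$ consists of algebraic numbers of height at most $H$, the value $f_i(\bm u_i)$ has height at most $H^{O(1)}$, where the implied constant depends only on $F$ (through its degrees and coefficient heights). Hence every multiplicatively dependent tuple counted by $N_F(S^m)$ admits a witnessing exponent vector $\bm k \neq 0$ with $\max_i|k_i| \ll (\log H)^{n-1}$.

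Next I would fix such a non-zero $\bm k = (k_1,\ldots,k_n)$ and bound the number of tuples $(\bm u_1,\ldots,\bm u_n) \in S^{mn}$ satisfying $\prod_i f_i(\bm u_i)^{k_i} = 1$. Since $\bm k \neq 0$, some coordinate, say $k_{i_0}$, is non-zero. The idea is that once the other $n-1$ blocks of variables $\bm u_i$ ($i \neq i_0$) are chosen — there are at most $|S|^{m(n-1)}$ such choices — the quantity $\prod_{i \neq i_0} f_i(\bm u_i)^{k_i}$ is a fixed non-zero algebraic number $\gamma$, and $\bm u_{i_0}$ must then satisfy $f_{i_0}(\bm u_{i_0})^{k_{i_0}} = \gamma^{-1}$. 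For each choice of the other blocks this pins $f_{i_0}(\bm u_{i_0})$ down to one of at most $|k_{i_0}|$ possible values (the $|k_{i_0}|$-th roots of $\gamma^{-1}$); a value of a fixed non-zero polynomial cannot be attained by more than $O(1)$ points of $S$ once that value is fixed — more carefully, fixing $f_{i_0}(\bm u_{i_0})$ to a constant $c$ restricts $\bm u_{i_0}$ to the hypersurface $f_{i_0} = c$, and intersecting with the discreteness of $S$ inside a box one still gets at most $O(|S|^{m-1})$ points, but in fact crudely bounding by $|S|^m$ costs nothing here. So the number of tuples for fixed $\bm k$ is $\ll |S|^{m(n-1)} \cdot |k_{i_0}| \cdot |S|^{m} \cdot |S|^{-1}$; being generous, it is at most $|S|^{mn-1} \max_i|k_i|$.

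Finally I would sum over all admissible non-zero exponent vectors $\bm k$. The number of $\bm k \in \Z^n$ with $\max_i |k_i| \ll (\log H)^{n-1}$ is $\ll (\log H)^{n(n-1)}$, and for each such $\bm k$ we additionally lose a factor $\max_i |k_i| \ll (\log H)^{n-1}$ from the root-extraction step, giving a total of
\[
N_F(S^m) \ll |S|^{mn-1} (\log H)^{n(n-1)} (\log H)^{n-1} = |S|^{mn-1} (\log H)^{n^2-1},
\]
as claimed. The main obstacle — or rather the point requiring the most care — is the second step: justifying that after fixing all but one block of variables and one coordinate of $\bm k$, the remaining block is confined to $O(|S|^{m-1})$ values (or at any rate that the crude bound $|S|^m$ combined with the root count does not overshoot $|S|^{mn-1}$ times polylogarithmic factors). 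This needs the observation that a non-constant polynomial $f_{i_0}$ takes each fixed value on a proper subvariety of $\A^m$, together with the trivial fact that $|S \cap \text{(coordinate slice)}| \le |S|$; one must also handle the genuinely degenerate cases where $f_{i_0}$ could be constant, but then multiplicative dependence with that coordinate is either automatic or impossible and the corresponding count is absorbed into a lower-order term. Bookkeeping the dependence of all implied constants on $F$ (degrees, heights of coefficients) and on $n$, but not on $H$ or $|S|$, is the remaining routine matter.
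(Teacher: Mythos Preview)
Your approach is essentially the paper's: bound $\mathrm{H}(f_i(\bm u_i))\le H^{O(1)}$, invoke Loxton--van der Poorten to get a nonzero exponent vector $\bm k$ with $\max_i|k_i|\le M\ll(\log H)^{n-1}$, and then sum over the $O(M^n)$ admissible $\bm k$, picking up one further factor $M$ per $\bm k$ to reach $M^{n+1}|S|^{mn-1}\ll|S|^{mn-1}(\log H)^{n^2-1}$. The only cosmetic difference is how that last factor $M$ arises: the paper clears denominators, freezes all but one \emph{scalar} coordinate $u_{ij}$, and bounds the roots of the resulting univariate polynomial of degree $O(M)$; you instead freeze all but one \emph{block} $\bm u_{i_0}$, extract the $\le|k_{i_0}|$ possible values of $f_{i_0}(\bm u_{i_0})$, and then need a Schwartz--Zippel type bound $|\{\bm u\in S^m:f_{i_0}(\bm u)=c\}|\ll_F|S|^{m-1}$ for each fixed value $c$.

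One correction: your aside that ``crudely bounding by $|S|^m$ costs nothing here'' is wrong --- that would overshoot by a full factor of $|S|$. You genuinely need the $O(|S|^{m-1})$ hypersurface bound (which you then use in the displayed product $|S|^{m(n-1)}\cdot|k_{i_0}|\cdot|S|^m\cdot|S|^{-1}$ anyway), so just delete the misleading remark and state Schwartz--Zippel cleanly.
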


Let $K$ be a number field of degree $D$ over $\Q$, with ring of integers $\cO_K$. Let $\cB_K(H)$ denote the set of algebraic integers in $\cO_K$ of height at most $H$, and put $B_K(H) = |\cB_K(H)|$. It follows from the work of Widmer \cite[Theorem~1.1]{W} that
\begin{equation} \label{eq:BKH}
B_K(H) \ll_K H^D (\log H)^r,
\end{equation}
where $r=r_1+r_2-1$, with $r_1$ and $r_2$ the number of real and pairs of complex conjugate embeddings of $K$, respectively. Applying this to Proposition~\ref{prop:NaiveCount}, for any $F \in \overline{\Q}(X_1,\ldots,X_m)^n$, we have
$$
N_F(\cB_K(H)^m) \ll H^{D(mn-1)} (\log H)^{r(mn-1)+n^2-1}.
$$
Combining the approach from \cite{PSSS} with aforementioned dimension growth bounds, we can significantly improve the above estimate in the polynomial case, under certain conditions on $F$ (see Definition~\ref{def:NCC}).

\begin{theorem} \label{thm:AICount}
Let $F = (f_1,\ldots,f_n) \in \cO_K[X_1,\ldots,X_m]^n$. If $m > 1$, assume that for $i=1,\ldots,n$, $f_i-\alpha$ is irreducible and NCC over $K$ for all $\alpha \in \cO_K$, the homogeneous part of $f_i$ of degree $\deg f_i$ is $1$-irreducible (and geometrically irreducible if $m = 2$), and that $d := \min_i \deg f_i \geq 3$. Then there exists $\kappa=\kappa(m)$ (with $\kappa(1)=0$) such that
$$
N_F(\cB_K(H)^m) \ll H^{D(mn-v(m,d))} (\log H)^{rm(n-1)+\kappa+3},
$$
where
\begin{equation} \label{eq:v}
v(m,d) = \begin{cases} 2, & d \geq 4, \: m > 1 \\ 3-\frac{2}{\sqrt{3}}, & d=3, \: m > 1 \\ 1 & m = 1, \end{cases}
\end{equation}
and the implied constant depends only on $F$ and $K$. Moreover, if $m > 2$, $d \geq 5$, and the homogeneous part of each $f_i$ is $2$-irreducible, then the $\kappa$ in the exponent of $\log H$ can be removed.
\end{theorem}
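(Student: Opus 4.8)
The plan is to combine the multiplicative rank decomposition \eqref{eq:rankDecomp}, the effective bound on the size of multiplicative relations due to Loxton and van der Poorten \cite{PL} (already used for Proposition~\ref{prop:NaiveCount}), and the dimension growth estimates of Cluckers et al.\ \cite{CDHNV}; the latter is the source of both the saving $v(m,d)$ and the logarithmic factor $\kappa$. First I would record some preliminary observations: for $\bm u\in\cB_K(H)^m$ the value $f_i(\bm u)$ lies in $\cO_K$, is nonzero whenever it occurs as a coordinate of a multiplicatively dependent tuple, has degree at most $D$ over $\Q$, and satisfies $\log\mathrm H(f_i(\bm u))\ll\log H$ (it is a polynomial value at a point of bounded height). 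Moreover, for every $\alpha\in\cO_K$ the polynomial $f_i-\alpha$ has the same top-degree homogeneous part as $f_i$, so it is irreducible and NCC over $K$ and has $1$-irreducible (geometrically irreducible if $m=2$) leading form of degree $\ge d\ge3$; hence the dimension growth theorem of \cite{CDHNV} applies to the hypersurface $V(f_i-\alpha)$ and bounds the number of its integral points of height at most $H$ by $\ll H^{D(m-v(m,d))}(\log H)^{\kappa}$, for a suitable $\kappa=\kappa(m)$ (when $m=1$ this is simply the bound $\le\deg f_i$ on the number of roots, matching $v(1,d)=1$ and $\kappa(1)=0$).

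I would then estimate each term $N_{F,s}(\cB_K(H)^m)$ of \eqref{eq:rankDecomp} in turn. For $s\ge1$, after a permutation of the indices costing a factor $\binom ns=O(1)$, I may assume $\gamma_1:=f_1(\bm u_1),\dots,\gamma_s:=f_s(\bm u_s)$ are multiplicatively independent and that each of $\gamma_{s+1},\dots,\gamma_n$ is multiplicatively dependent with them. I count by: choosing $\bm u_1,\dots,\bm u_s\in\cB_K(H)^m$ freely, which by \eqref{eq:BKH} costs $\ll B_K(H)^{ms}\ll H^{Dms}(\log H)^{rms}$; noting that for each $j>s$, by \cite{PL}, there is a multiplicative relation between $\gamma_j$ and $\gamma_1,\dots,\gamma_s$ with exponents bounded polynomially in $\log H$, so that $\gamma_j$ lies in the division group of $\langle\gamma_1,\dots,\gamma_s,\mu_K\rangle$ and, being an algebraic integer with $\log\mathrm H(\gamma_j)\ll\log H$, belongs to an explicitly bounded set of admissible values — estimating the size of this set carefully, in the spirit of \cite{PSSS}, is the crucial point; and finally, for each admissible $\gamma_j\in\cO_K$, bounding the number of $\bm u_j\in\cB_K(H)^m$ with $f_j(\bm u_j)=\gamma_j$ by the dimension growth estimate above, i.e.\ by $\ll H^{D(m-v(m,d))}(\log H)^{\kappa}$. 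The rank $0$ term is handled similarly: some coordinate $\gamma_i$ is one of the finitely many roots of unity in $\cO_K$, so $\bm u_i$ lies on one of $O(1)$ hypersurfaces $V(f_i-\zeta)$ (contributing $\ll H^{D(m-v(m,d))}(\log H)^{\kappa}$), while the remaining $n-1$ coordinates are free.

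Multiplying these contributions over the $n-s$ dependent coordinates yields
$$
N_{F,s}(\cB_K(H)^m)\ll H^{D(ms+(n-s)(m-v(m,d)))}(\log H)^{E(s)}=H^{D(mn-(n-s)v(m,d))}(\log H)^{E(s)}
$$
for an exponent $E(s)$ bounded in terms of $r,m,n,\kappa$, and likewise $N_{F,0}(\cB_K(H)^m)\ll H^{D(mn-v(m,d))}(\log H)^{rm(n-1)+\kappa}$. Since $v(m,d)>0$, the exponent of $H$ equals $D(mn-v(m,d))$ exactly when $s\in\{0,n-1\}$ and is strictly smaller for $1\le s\le n-2$, so the latter terms are dominated by a fixed positive power of $H$; summing over $s$ and absorbing the constants (which depend only on $F$ and $K$) then gives the claimed bound, provided the surviving powers of $\log H$ are bounded by $rm(n-1)+\kappa+3$. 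For the final assertion, when $m>2$, $d\ge5$ and the homogeneous parts of the $f_i$ are $2$-irreducible, the stronger form of the dimension growth estimate in \cite{CDHNV} carries no logarithmic loss, so $\kappa$ may be taken to be $0$.

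The main obstacle I anticipate is the precise control of the powers of $\log H$, i.e.\ obtaining the exponent $rm(n-1)+\kappa+3$ rather than one growing with $n$: the free coordinates account for $rm(n-1)$ and the dimension growth step for $\kappa$, but a crude enumeration of the bounded multiplicative relations (or of the division group elements of bounded height they produce) costs a power of $\log H$ that increases with $n$, and pinning it down to the universal constant $3$ requires arguing directly, following \cite{PSSS}, about the number of bounded-height integral values that are multiplicatively dependent with a fixed multiplicatively independent set. A secondary technical point, already built into the hypotheses, is that the shifted polynomials $f_i-\gamma$ must satisfy all the conditions of \cite{CDHNV} for \emph{every} value $\gamma\in\cO_K$ that can actually arise — hence the requirement that $f_i-\alpha$ be irreducible and NCC over $K$ for all $\alpha\in\cO_K$ — while everything else (the height estimate for $f_i(\bm u)$, Widmer's bound \eqref{eq:BKH}, the combinatorial factors) is routine.
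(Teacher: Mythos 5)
There is a genuine gap, and it starts with your structural description of rank $s$. Having rank $s$ does \emph{not} mean that (after reordering) each of $\gamma_{s+1},\dots,\gamma_n$ is multiplicatively dependent on $\gamma_1,\dots,\gamma_s$; it only means that every $s$-subset is independent while \emph{some} $(s+1)$-subset satisfies a relation (necessarily with all exponents nonzero). For instance $(\gamma_1,\gamma_1^2,\gamma_3)$ with $\gamma_1$ not a root of unity and $\gamma_3$ independent of $\gamma_1$ has rank $1$, yet $\gamma_3$ is not dependent on $\gamma_1$ under any reordering. Consequently your per-rank bound with saving $(n-s)v(m,d)$ is not just unproved but false: take $n=3$, $m=3$, $d\ge4$ (so $v=2$), $f_1=f_2$, and count tuples with $\bm u_2=\bm u_1$ and $\bm u_3$ arbitrary; these are generically of rank $1$ and number $\gg H^{2Dm}=H^{D(mn-3)}$, which exceeds your claimed $H^{D(mn-4)}$. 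This also invalidates your conclusion that only $s\in\{0,n-1\}$ attain the exponent $D(mn-v(m,d))$ while $1\le s\le n-2$ are strictly smaller: in fact the rank-$1$ term is (together with rank $0$) the dominant one, since a rank-$1$ vector only guarantees one dependent \emph{pair}, hence only one constrained coordinate, and it is exactly this term that produces the $+3$ in the $\log H$ exponent.

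The second problem is the step you yourself flag as unresolved: bounding the set of admissible values $\gamma_j$ and keeping the power of $\log H$ at $rm(n-1)+\kappa+3$ rather than something growing with $n$. The paper resolves both issues by a different split of \eqref{eq:rankDecomp}: for all ranks $s\ge2$ it invokes Proposition~\ref{prop:RankCount}, whose proof fixes one side of a single dependent $(s+1)$-relation and uses Lemma~\ref{lem:Siegel}, the smooth-number bound of Lemma~\ref{lem:SmoothBound} and Lemma~\ref{lem:leadCoeffCount} to show the constrained values form a set of size $H^{o(1)}$; the resulting saving $\lceil(s+1)/2\rceil v(m,d)\ge 2v(m,d)$ in the exponent of $H$ then absorbs these $H^{o(1)}$ losses, so no careful $\log$-bookkeeping is needed there. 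The rank-$1$ case is handled separately and elementarily: the relation involves only two coordinates, so $f_i(\bm u_i)$ is one of at most $|k_i|$ roots in $\cO_K$ of $X^{k_i}-f_j(\bm u_j)^{k_j}$ with $|k_i|,|k_j|\ll\log H$ by Lemma~\ref{lem:Siegel}, and combining this with Theorem~\ref{thm:DimGrowth} and \eqref{eq:BKH}, then summing over the exponents, yields precisely $(\log H)^{rm(n-1)+\kappa+3}$. Your rank-$0$ treatment and your use of the dimension growth input (including the removal of $\kappa$ when $m>2$, $d\ge5$ with $2$-irreducible top forms) agree with the paper, but without correcting the rank structure and supplying the two counting mechanisms above, your outline does not deliver the stated exponents of either $H$ or $\log H$.
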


As in \cite{PSSS}, we will compute a bound in the case of a fixed multiplicative rank $0 \leq s \leq n-1$, and note that the majority of the contribution to the bound in Theorem~\ref{thm:AICount} comes from the rank 0 and rank 1 terms in the decomposition \eqref{eq:rankDecomp}, which we treat separately.

\begin{prop} \label{prop:RankCount}
Let $F = (f_1,\ldots,f_n) \in \cO_K[X_1,\ldots,X_m]^n$. If $m > 1$ assume that for $i=1,\ldots,n$, $f_i-\alpha$ is irreducible and NCC over $K$ for all $\alpha \in \cO_K$, the homogeneous part of $f_i$ of degree $\deg f_i$ is $1$-irreducible (and geometrically irreducible if $m=2$), and $d := \min_i \deg f_i \geq 3$.  Then for $0 \leq s < n$ we have
\begin{equation*}
N_{F,s}(\cB_K(H)^m) \ll H^{D(mn-\lceil (s+1)/2 \rceil v(m,d)) + o(1)},
\end{equation*}
where $v(m,d)$ is defined as in \eqref{eq:v}, and the implied constants depend only on $K$ and $F$.
\end{prop}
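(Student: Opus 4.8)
The strategy follows \cite{PSSS}: decompose according to the shape of the multiplicative relation, use the bounds of Loxton--van der Poorten \cite{PL} to restrict to relations with small exponents, and control the number of admissible tuples of values by means of the dimension growth estimates assembled in Section~\ref{sec:Stat}, which crucially bound the fibres of the $f_i$ over points of $\cO_K$. First I would dispose of the rank~$0$ case: if $(f_1(\bm u_1),\dots,f_n(\bm u_n))$ has rank $0$ then some $f_i(\bm u_i)$ is a root of unity, and since $K$ has only $O_K(1)$ roots of unity, $\bm u_i$ lies on one of $O_K(1)$ hypersurfaces $f_i-\zeta=0$ with $\zeta\in\cO_K$, each irreducible and NCC over $K$ (by hypothesis, as $\zeta\in\cO_K$) with admissible homogeneous part, so Section~\ref{sec:Stat} gives $\#(\{f_i=\zeta\}\cap\cB_K(H)^m)\ll H^{D(m-v(m,\deg f_i))+o(1)}\le H^{D(m-v(m,d))+o(1)}$ (for $m=1$ the fibre has $\le\deg f_i$ points, matching $v(1,d)=1$). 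Letting the other $n-1$ coordinates run freely over $\cB_K(H)^m$, of cardinality $\ll H^{Dm+o(1)}$ by \eqref{eq:BKH}, yields $N_{F,0}(\cB_K(H)^m)\ll H^{D(mn-v(m,d))+o(1)}$, which is the assertion since $\lceil 1/2\rceil=1$.

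For $1\le s<n$ I would rely on two inputs. The uniform fibre bound: for every $\nu\in\cO_K$ the polynomial $f_i-\nu$ is irreducible and NCC over $K$ with $1$-irreducible (resp. geometrically irreducible if $m=2$) homogeneous part, so $\#(\{f_i=\nu\}\cap\cB_K(H)^m)\ll H^{D(m-v(m,d))+o(1)}$ independently of $\nu$. And the exponent bound: by \cite{PL} (in the quantitative form used in \cite{PSSS}), since the values $\nu_j:=f_j(\bm u_j)$ have height $\le H^{O(1)}$ and degree $\le D$ over $\Q$, a rank-$s$ relation may be taken with all exponents $O((\log H)^{c})$; moreover, as any $s$ coordinates are multiplicatively independent, the vector admits a minimal dependent sub-tuple of size $s+1$ carrying a single relation $\prod_j f_{i_j}(\bm u_{i_j})^{k_j}=1$ with $\gcd(k_j)=1$ and all $k_j\ne 0$, the remaining $n-s-1$ coordinates being essentially unconstrained.

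The engine is a pair estimate: for $g,h$ among the $f_i$, fixed nonzero integers $a,b=O((\log H)^c)$, and any fixed $\gamma$ (or $\gamma$ a root of unity), the number of $(\bm u,\bm v)\in\cB_K(H)^{2m}$ with $g(\bm u)^a h(\bm v)^b=\gamma$ is $\ll H^{D(2m-v(m,d))+o(1)}$; one proves this by replacing $a,b$ with $a/\gcd(a,b),b/\gcd(a,b)$ at the cost of $O((\log H)^c)$ choices of a root, after which, for each of the $\ll H^{Dm+o(1)}$ choices of $\bm u$, the quantity $h(\bm v)^{b/\gcd(a,b)}$ is determined, so $h(\bm v)$ takes one of $O((\log H)^c)$ values and $\bm v$ lies in a corresponding fibre. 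Iterating $\gcd$-reductions turns the single relation into a chain which, for $\lceil(s+1)/2\rceil$ of the active coordinates, pins $\nu_{i_j}$ to $O((\log H)^c)$ values and hence confines $\bm u_{i_j}$ to a fibre of $f_{i_j}$, saving a factor $v(m,d)$ in the $D$-exponent each; with the trivial bound $\ll H^{Dm+o(1)}$ on the rest this gives the stated exponent. In the more rigid configurations where this direct pairing is unavailable — the $k_j$ all of one sign, so the active values are units of $\cO_K$ (of which there are only $O((\log H)^r)$ of height $\le H^{O(1)}$), or $s$ near $n$ with only one relation — one instead uses that the $f_i$-values are algebraic integers of controlled height: the relation forces a matching of prime factorizations, confining $|N_{K/\Q}(f_i(\bm u_i))|$ and, after a dyadic decomposition in the heights $\mathrm{H}(\bm u_i)$, the points $\bm u_i$ themselves to a thin region, and summing over the dyadic ranges recovers (indeed typically exceeds) the saving of $\lceil(s+1)/2\rceil$ factors of $v(m,d)$.

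The main obstacle I anticipate is exactly this last assembly: arranging the pairing and height bookkeeping so that it yields precisely $\lceil(s+1)/2\rceil$ independent factors of $v(m,d)$, uniformly over all rank-$s$ configurations, and in particular extracting that many savings from what may be a single multiplicative relation among the $\nu_j$. The part that is routine once the pair estimate and the dimension growth bounds are in place is the generic case with enough sign variation among the exponents; the delicate part is showing that the remaining, rigid configurations are correspondingly sparser, which is where the integrality of the values and the height constraints have to do the work.
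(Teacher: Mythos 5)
Your reduction is on the right track at the start: passing to exactly $s+1$ ``active'' indices with all exponents non-zero and of size $O((\log H)^{s})$ via Loxton--van der Poorten, handling rank $0$ through the fibres $f_i=\zeta$ over the finitely many roots of unity, and using the dimension growth bound for $f_i-\alpha$, $\alpha\in\cO_K$, are all exactly the paper's ingredients, and your $s=1$ pair estimate is fine. The genuine gap is the step you yourself flag as the main obstacle: extracting $\lceil (s+1)/2\rceil$ \emph{independent} savings of $v(m,d)$ from a single relation. The proposed engine --- iterating the pair estimate through a ``chain of gcd-reductions'' --- does not do this: once you fix the $\bm u_j$ for the indices on one side of the relation, the relation only determines the \emph{product} $\prod_{i\in I}f_i(\bm u_i)^{|k_i|}$, not the individual values $f_i(\bm u_i)$, so solving for roots pins down at most one further coordinate; with $s+1\ge 3$ active coordinates and mixed signs this yields a single factor of $v(m,d)$, far short of $\lceil (s+1)/2\rceil$. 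Your fallback remark about matching prime factorizations is the right idea, but it is not a patch for a few ``rigid configurations'': in the paper it \emph{is} the proof, applied uniformly, and your sketch of it (dyadic decomposition, ``thin regions'') is not developed enough to produce the independent savings.

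Concretely, the paper writes the relation as $\prod_{i\in I}f_i(\bm u_i)^{|k_i|}=\prod_{j\in J}f_j(\bm u_j)^{|k_j|}$ with $|I|\ge\lceil (s+1)/2\rceil$, fixes the exponents and the $\bm u_j$, $j\in J$ (at a cost of $H^{Dm(n-|I|)+o(1)}$), and then observes that for \emph{each} $i\in I$ separately the integer $N_{K/\Q}(f_i(\bm u_i))$ is composed only of primes dividing $\prod_{j\in J}N_{K/\Q}(f_j(\bm u_j))$, which by Lemma~\ref{lem:HeightTransform} and \eqref{eq:NaiveHeightCompare} has absolute value at most $(2H^{C})^{Dn}$. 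Majorizing that prime set by an initial segment of the primes and invoking the prime number theorem shows the relevant smoothness parameter is $\ll\log H$, so by de Bruijn's bound (Lemma~\ref{lem:SmoothBound}) the norm takes at most $\exp(O(\log H/\log\log H))$ values; by Lemma~\ref{lem:leadCoeffCount} each norm value is attained by at most $\exp(O(\log H/\log\log H))$ elements $\alpha$ of $K$ of height $H^{O(1)}$; and Theorem~\ref{thm:DimGrowth} bounds each fibre $f_i(\bm u_i)=\alpha$ by $H^{D(m-v(m,d))}(\log H)^{\kappa}$. Since this applies to every $i\in I$ simultaneously and independently, one gets the full factor $H^{-D\lceil (s+1)/2\rceil v(m,d)}$ up to $H^{o(1)}$, and the all-one-sign case ($J=\emptyset$, norms $\pm1$) is subsumed rather than needing a separate unit count. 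These two counting inputs --- the smooth-number bound with smoothness level $\ll\log H$ and the bound on algebraic integers of bounded height with prescribed norm --- are precisely what is missing from your write-up, and without them the claimed exponent is not established for $s\ge 2$.
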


\subsection{Acknowledgements} The author is very grateful to Igor Shparlinski for suggesting the problems discussed in the paper, as well as giving useful comments on an initial draft.

\section{A finiteness result} \label{sec:finite}

We begin by quickly proving Proposition~\ref{prop:gcdFinite}. The key ingredient in the proof is the fact that for a collection $f_1,\ldots,f_n \in \Z[X_1,\ldots,X_m]$ coprime polynomials, $\gcd(f_1(\bm u), \ldots, f_n(\bm u))$ takes on only finitely many values as $\bm u$ ranges over $\Z^m$. This can be proved directly using the Nullstellensatz, but is also a consequence of work of Bodin and D\`{e}bes.

\begin{prop} \label{prop:BD}
Let $f_1,\ldots,f_n \in \Z[X_1,\ldots,X_m]$ be nonzero polynomials with no common zero in $\C^m$. Then the set $\{ \gcd(f_1(\bm u),\ldots,f_n(\bm u)) : \bm u \in \Z^m \}$ is finite.
\end{prop}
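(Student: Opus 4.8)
The plan is to give the direct Nullstellensatz argument, which is elementary once set up correctly. Since $f_1,\ldots,f_n$ have no common zero in $\C^m$, they generate the unit ideal in $\C[X_1,\ldots,X_m]$; but to control denominators we want a statement over $\Z$. The key step is to invoke the \emph{effective} (or arithmetic) Nullstellensatz: there exist polynomials $g_1,\ldots,g_n \in \Z[X_1,\ldots,X_m]$ and a nonzero integer $N$ such that
\begin{equation} \label{eq:nsz}
g_1(\bm X) f_1(\bm X) + \cdots + g_n(\bm X) f_n(\bm X) = N.
\end{equation}
One way to see this: over $\Q[X_1,\ldots,X_m]$ the $f_i$ still have no common zero (a common zero would lie in some finite extension, contradicting the hypothesis after clearing it down to $\overline{\Q}$), so $1 = \sum h_i f_i$ with $h_i \in \Q[X_1,\ldots,X_m]$; clearing the common denominator of the coefficients of the $h_i$ gives \eqref{eq:nsz}. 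Alternatively one cites Bodin--D\`ebes directly, as the text suggests, which yields the same kind of Bézout identity with bounded data.

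With \eqref{eq:nsz} in hand the conclusion is immediate: for any $\bm u \in \Z^m$, evaluating at $\bm X = \bm u$ gives $\sum_i g_i(\bm u) f_i(\bm u) = N$, and since $\gcd(f_1(\bm u),\ldots,f_n(\bm u))$ divides the left-hand side, it divides $N$. Hence every value of $\gcd(f_1(\bm u),\ldots,f_n(\bm u))$ is a divisor of the fixed nonzero integer $N$, so the set of such values is finite (and in fact bounded by $|N|$ in absolute value, with at most $d(N)$ distinct values). We should also note the degenerate book-keeping: if some $f_i(\bm u) = 0$ the gcd convention should be read so that the divisibility claim still holds — e.g.\ $\gcd(0,a) = |a|$ — and \eqref{eq:nsz} forces not all $f_i(\bm u)$ to vanish simultaneously anyway, consistent with the no-common-zero hypothesis.

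The main (and essentially only) obstacle is justifying the \emph{integrality} of $N$ in \eqref{eq:nsz}, i.e.\ passing from a $\C$- or $\Q$-coefficient Bézout identity to one with integer coefficients and a nonzero integer on the right. The Nullstellensatz over $\C$ is cheap; what one needs is that "no common zero over $\C$" for polynomials with $\Z$-coefficients already gives the relation over $\Q$ (clear, by Hilbert's Nullstellensatz applied over $\overline{\Q}$ together with the fact that $\Q[\bm X]$ is Noetherian, since a proper ideal would have a zero in $\overline{\Q}^m$), and then clearing denominators is routine. Citing \cite{BD} (Bodin--D\`ebes) is the clean way to sidestep any effective-bounds subtleties, since we only need finiteness, not an explicit bound on $N$; but the self-contained argument above is short enough to include.
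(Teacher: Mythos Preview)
Your argument is correct: the weak Nullstellensatz over $\Q$ (or the linear-algebra descent from $\C$ to $\Q$) gives $1 = \sum h_i f_i$ with $h_i \in \Q[\bm X]$, clearing denominators yields \eqref{eq:nsz}, and then $\gcd(f_1(\bm u),\ldots,f_n(\bm u)) \mid N$ for all $\bm u \in \Z^m$. The paper takes the other route it mentions in the paragraph preceding the statement: its proof is simply a citation to \cite[Corollary~1.4]{BP} (Bodin--D\`{e}bes). So you have supplied the self-contained Nullstellensatz argument the paper alludes to but omits; the payoff is that your version needs no external reference, while the paper's citation also gives somewhat more (e.g.\ information on which gcd values actually occur), which is not needed here.
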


\begin{proof}
This is part of \cite[Corollary~1.4]{BP}.
\end{proof}

We will show that a multiplicative dependence relation leads to a contradiction with Proposition~\ref{prop:BD}, under the assumption that the largest prime factor of values of our polynomials grow with the height of the input. 

\begin{proof}[Proof~of~Proposition~\ref{prop:gcdFinite}]
Suppose $\mathbf{u} = (u_1,\ldots,u_m) \in S$ is such that $f_1(\mathbf{u}), \ldots, f_n(\mathbf{u})$ are multiplicatively dependent, say
$$
\prod_{i=1}^n f_i(\mathbf{u})^{k_i} = 1,
$$
with the $k_i \in \Z$ not all zero. Suppose $f_i(\mathbf{u}) \neq \pm 1$ for each $i$, as this is only satisfied for finitely many $\mathbf{u}$ by assumption, so our multiplicative dependence relation is of rank at least 1. Fix $1 \leq i \leq n$ with $k_i \neq 0$ and such that $P^+(f_i(\bm u)) \to \infty$ as $\max_{1 \leq i \leq m} |u_i| \to \infty$. Then each prime $p$ dividing $f_i(\mathbf{u})$ divides some $f_j(\mathbf{u})$ with $j \neq i$ and $k_j \neq 0$. By Proposition~\ref{prop:BD}, the set of possible values for $\gcd(f_i(\mathbf{u}), f_j(\mathbf{u}))$ is finite, but this contradicts our assumption that $P^+(f_i(\bm u)) \to \infty$ as $\max_{1 \leq i \leq m} |u_i| \to \infty$.
\end{proof}

\section{Statistical results} \label{sec:Stat}

In this section we will prove Theorem~\ref{thm:AICount} and related results. We begin with some preliminaries.

\subsection{Weil height} We record some well-known results about the absolute Weil height \cite[\textsection \textsection 3.2]{Wald}. Let $\alpha_1,\ldots,\alpha_N$ be algebraic numbers. Then
\begin{align} \label{eq:HeightSumBounds}
\mathrm{H}(\alpha_1 \cdots \alpha_N) & \leq \mathrm{H}(\alpha_1) \cdots \mathrm{H}(\alpha_N) \notag \\
\mathrm{H}(\alpha_1 + \cdots + \alpha_N) & \leq N \mathrm{H}(\alpha_1) \cdots \mathrm{H}(\alpha_n),
\end{align}
and if $\alpha_1 \neq 0$ and $k$ is an integer, then
\begin{equation} \label{eq:HeightPower}
\mathrm{H}(\alpha_1^k)=\mathrm{H}(\alpha_1)^{|k|}.
\end{equation}

We also recall a well-known comparison between the naive height $\mathrm{H}_0$ and the absolute Weil height $\mathrm{H}$ \cite[Equation~(6)]{Mah}. Let $\alpha$ be an algebraic number of degree $D$. Then
\begin{equation} \label{eq:NaiveHeightCompare}
\mathrm{H}_0(\alpha) \leq (2 \mathrm{H}(\alpha))^D.
\end{equation}



Moreover, we note the following simple bound on the height of a polynomial value. 

\begin{lemma} \label{lem:HeightTransform}
Let $f \in \overline \Q[X_1,\ldots,X_m]$, and let $\bm u \in \overline \Q^m$ have coordinates of height at most $H$. Then there exists a constant $C_f > 0$, depending only on $f$, such that $\mathrm{H}(f(\bm{u})) \leq H^C$.
\end{lemma}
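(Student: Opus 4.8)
The plan is to bound the height of $f(\bm u)$ by controlling separately the contribution of the coefficients of $f$ and the contribution of the point $\bm u$, using the submultiplicativity and subadditivity of the Weil height recorded in \eqref{eq:HeightSumBounds} together with the power rule \eqref{eq:HeightPower}. Write $f = \sum_{\bm e} c_{\bm e} X_1^{e_1} \cdots X_m^{e_m}$, where the sum runs over a finite set of multi-indices $\bm e = (e_1,\ldots,e_m)$, and let $N$ be the number of monomials, $c := \max_{\bm e} \mathrm{H}(c_{\bm e})$ the maximum height of a coefficient, and $d := \deg f$ the total degree.

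First I would estimate the height of a single monomial term. Given $\bm u = (u_1,\ldots,u_m)$ with $\mathrm{H}(u_j) \leq H$ for each $j$, applying \eqref{eq:HeightPower} gives $\mathrm{H}(u_j^{e_j}) = \mathrm{H}(u_j)^{e_j} \leq H^{e_j}$ (when $u_j \neq 0$; if some $u_j = 0$ the corresponding monomial contributes $0$ or is a lower-degree term, which only helps), and then \eqref{eq:HeightSumBounds} yields
\[
\mathrm{H}\!\left(c_{\bm e} u_1^{e_1} \cdots u_m^{e_m}\right) \leq \mathrm{H}(c_{\bm e}) \prod_{j=1}^m \mathrm{H}(u_j)^{e_j} \leq c \, H^{e_1 + \cdots + e_m} \leq c\, H^{d}.
\]
Next I would sum over the $N$ monomials using the subadditivity bound in \eqref{eq:HeightSumBounds}: since $f(\bm u)$ is a sum of at most $N$ terms each of height at most $c H^d$, we get $\mathrm{H}(f(\bm u)) \leq N (c H^d)^N = N c^N H^{dN}$. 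Absorbing the constants $N$ and $c^N$, which depend only on $f$, into a slightly larger exponent — valid for $H$ bounded below away from $1$, which is the standing assumption in the paper — one obtains $\mathrm{H}(f(\bm u)) \leq H^{C}$ for a suitable constant $C = C(f)$, giving the claim (up to the harmless typo in the statement, where $C_f$ should read $C$).

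There is really no serious obstacle here; the only point requiring a word of care is the degenerate case where some coordinate $u_j$ equals $0$, so that \eqref{eq:HeightPower} does not literally apply to $u_j^{e_j}$ — but in that case the offending monomials simply vanish, and one may apply the estimate to the nonzero monomials only, with the zero terms dropped before invoking subadditivity. The other mild point is the passage from a bound of the shape $A \cdot H^{B}$ with $A$ constant to a clean bound of the shape $H^{C}$; this is where the blanket hypothesis that $H$ is large enough for $\log H$ to be positive (indeed bounded below) is used, since then $A \leq H^{\log A / \log H} \leq H^{\varepsilon}$ can be folded into the exponent. I expect the author's proof to be essentially this short computation.
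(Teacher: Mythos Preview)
Your proof is correct and follows essentially the same route as the paper's: write $f$ as a sum of monomials, bound each monomial's height via \eqref{eq:HeightSumBounds} and \eqref{eq:HeightPower}, apply the subadditivity bound to the sum, and absorb the resulting constant factor into the exponent using the standing assumption that $H$ is large. The paper's version is marginally tighter in that it keeps track of the actual exponent sum $\sum_{k,i}\lambda_{ki}$ rather than the cruder $dN$, but this makes no difference for the statement as formulated.
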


\begin{proof}
Write
$$
f(X_1,\ldots,X_m) = \sum_{k=1}^\ell a_k X_1^{\lambda_{k1}} \cdots X_m^{\lambda_{km}},
$$
and $\bm{u}=(u_1,\ldots, u_m)$ with $\mathrm{H}(u_i) \leq H$ for $1 \leq i \leq m$. Then by \eqref{eq:HeightSumBounds},
\begin{align*}
\mathrm{H} \left( f(\bm u) \right) & = \mathrm{H} \left( \sum_{k=1}^\ell a_k u_1^{\lambda_{k1}} \cdots u_m^{\lambda_{km}} \right) \\
& \leq \ell \prod_{k=1}^\ell \left( \mathrm{H}\left(a_k\right) \prod_{i=1}^m \mathrm{H} \left(u_i \right)^{\lambda_{ki}} \right) \leq H^{C_f},
\end{align*}
where
$$
C_f = \log_H \left( \ell \prod_{k=1}^\ell \mathrm{H}(a_k) \right) + \sum_{k=1}^\ell \sum_{i=1}^m \lambda_{kj}
$$
depends only on $f$.
\end{proof}

\subsection{Counting integral solutions to polynomial equations} We now state the affine dimension growth bounds which will be useful for our purposes. These follow from \cite[Theorem~4.1]{CDHNV} and \cite[Theorem~5]{BCK}. Recall that NCC is defined in Definition~\ref{def:NCC}.

\begin{theorem} \label{thm:DimGrowth}
Let $K$ be a number field of degree $D$. Given $m > 1$ there exist a constant $\kappa=\kappa(m)$ such that for all polynomials $f \in \cO_K[X_1,\cdots,X_m]$ of degree $d \geq 3$ such that $f$ is irreducible over $K$ and NCC, the following holds. If the homogeneous part of $f$ of degree $d$ is $1$-irreducible over $K$ (and geometrically irreducible if $m=2$), we have
$$
\left| \{ \bm u \in \cB_K(H)^m : f(\bm u) = 0 \} \right| \ll_{K,m} d^7 H^{D(m-v(m,d))} (\log H)^\kappa, 
$$
where $v(m,d)$ is defined as in \eqref{eq:v}. In the case $m=2$ we can replace $d^7$ by $d^2$.

If moreover $m > 2$, $d \geq 5$ and the homogeneous part of $f$ of degree $d$ is $2$-irreducible over $K$, then
$$
\left| \{ \bm u \in \cB_K(H)^m : f(\bm u) = 0 \} \right| \ll_{K,m} d^7 H^{D(m-2)}.
$$
\end{theorem}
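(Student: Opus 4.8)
The plan is to derive both displayed bounds directly from the affine dimension growth theorems of Cluckers et al.\ \cite[Theorem~4.1]{CDHNV} (for the first bound) and \cite[Theorem~5]{BCK} (for the second, sharper one), so that the work amounts to checking that the hypotheses imposed here imply those of the cited results and that the error terms combine into the stated shape.

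First I would set up the hypothesis dictionary. Irreducibility of $f$ over $K$ is stronger than $1$-irreducibility, and, together with the NCC hypothesis of Definition~\ref{def:NCC} and $d = \deg f \ge 3$, this is exactly the running assumption of \cite[Theorem~4.1]{CDHNV}; the conditions on the top-degree form $f_d$ ($1$-irreducible over $K$ in general, geometrically irreducible when $m = 2$, and $2$-irreducible over $K$ with $d \ge 5$ and $m > 2$ for the sharper bound) are precisely those under which the cited theorems output the exponents $m - v(m,d)$ and $m - 2$ with $v(m,d)$ as in \eqref{eq:v}. When $m = 2$ the NCC condition is vacuous and the relevant input is the plane-curve case of the determinant method (Bombieri--Pila, Heath-Brown, Walsh), which is the source of the improved dependence $d^2$ in place of $d^7$, and where the logarithmic region of a curve whose leading form has no $K$-rational linear factor accounts for the power $(\log H)^\kappa$ with $\kappa = \kappa(m)$.

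Second I would reconcile the counting set $\cB_K(H)^m$ with the integral points of bounded height in an affine box used in \cite{CDHNV,BCK}. The cleanest route is to invoke the number-field formulation of those results directly. Failing that, one reduces to $\Q$ by restriction of scalars: fix a $\Z$-basis $\omega_1,\dots,\omega_D$ of $\cO_K$, write each $u_i = \sum_{j=1}^D x_{ij}\omega_j$ with $x_{ij} \in \Z$, so that $f(\bm u) = 0$ becomes a system of $D$ equations over $\Z$ in the $mD$ variables $x_{ij}$, and use \eqref{eq:NaiveHeightCompare} together with the geometry-of-numbers analysis of Widmer \cite{W} underlying \eqref{eq:BKH}: the locus $\{\, \mathrm{H}(u_i) \le H : 1 \le i \le m \,\}$ is covered by a polylogarithmic-in-$H$ number of axis-parallel boxes whose side lengths have product $\asymp H^D$ in each of the $m$ blocks of $D$ coordinates, and applying a dimension growth estimate of the type in \cite{CDHNV} on each box and summing yields the factor $H^{D(m - v(m,d))}$ at the cost of one more power of $\log H$, absorbed by enlarging $\kappa(m)$. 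The explicit polynomial dependence on $d$ carried by both cited theorems is recorded as $d^7$ in general, and the logarithm-free statement is precisely the regime $m > 2$, $d \ge 5$, $f_d$ $2$-irreducible of \cite[Theorem~5]{BCK}.

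The step I expect to require the most care is the passage to a general number field $K$. If \cite{CDHNV} and \cite{BCK} are already stated over number fields — which the global-field language around Definition~\ref{def:NCC} suggests — this is immediate, and the whole proof reduces to the hypothesis-matching and constant bookkeeping above, together with a careful citation for the $m = 2$ curve case. Otherwise one must reduce to $\Q$ by restriction of scalars, which turns $V(f) \subset \A^m_K$ into a codimension-$D$ complete intersection in $\A^{mD}_\Q$ and forces one to run dimension growth in Widmer's twisted boxes and to check that the NCC and leading-form hypotheses transfer correctly; that is where the real work would lie.
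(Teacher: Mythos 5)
Your proposal matches the paper's approach: the paper gives no independent proof of Theorem~\ref{thm:DimGrowth}, deducing it directly from \cite[Theorem~4.1]{CDHNV} and \cite[Theorem~5]{BCK}, which are already formulated over global fields, so the argument is exactly the hypothesis-matching you describe. Your restriction-of-scalars fallback is therefore unnecessary, and the rest of your bookkeeping (the $d^2$ saving for $m=2$ and the log-free bound for $m>2$, $d\ge 5$ with $2$-irreducible top form) is precisely what the cited theorems provide.
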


\subsection{Multiplicative structure of algebraic numbers} The next result shows that given multiplicatively dependent algebraic numbers, we can find a relation as \eqref{eq:MultDep} where the exponents are not too large.

\begin{lemma} \label{lem:Siegel}
Let $n \geq 2$, and let $\alpha_1,\ldots,\alpha_n$ be multiplicatively dependent non-zero algebraic numbers of degree at most $D$ and height at most $H$. Then there exists a positive number $A$, depending only on $n$ and $D$, and there are rational integers $k_1,\ldots,k_n$, not all zero, such that
$$
\alpha_1^{k_1} \cdots \alpha_n^{k_n} = 1
$$
and
$$
\max_{1 \leq i \leq n} |k_i| < A(\log H)^{n-1}.
$$
\end{lemma}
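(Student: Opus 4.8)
The plan is to run a pigeonhole (Siegel-lemma style) argument inside the number field $K := \Q(\alpha_1,\ldots,\alpha_n)$, whose degree $\delta := [K:\Q]$ is at most $D^n$ and so is bounded in terms of $n$ and $D$ alone. Two standard facts about $K$ enter. First, $K$ contains only finitely many roots of unity, say $w_K := |\mu_K| \le w(\delta)$, since $\mu_K$ generates a cyclotomic subfield of $K$, so $w(\delta)$ depends only on $\delta$. Second, there is a constant $c_\delta > 0$, depending only on $\delta$, such that every $\gamma \in K^\times$ that is not a root of unity satisfies $\mathrm{H}(\gamma) \ge e^{c_\delta}$; this is a Lehmer-type lower bound, and one may either cite Dobrowolski's (or Voutier's explicit) theorem or deduce the mere existence of $c_\delta$ from Northcott's finiteness theorem together with Kronecker's theorem.

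Consider the finitely generated group $\Gamma := \langle \alpha_1,\ldots,\alpha_n \rangle \le K^\times$, and the exact sequence $0 \to \Lambda \to \Z^n \xrightarrow{\Psi} \Gamma \to 0$ with $\Psi(e_i) = \alpha_i$. Since $\alpha_1,\ldots,\alpha_n$ are multiplicatively dependent, the relation lattice $\Lambda$ is nonzero, so $\Gamma$ has rank $\rho \le n-1$. The logarithmic height $\log \mathrm{H}$ is subadditive on $K^\times$, satisfies $\log\mathrm{H}(\gamma^k) = |k|\log\mathrm{H}(\gamma)$ by \eqref{eq:HeightPower}, and by Kronecker's theorem vanishes exactly on $\mu_K$; hence it descends to a norm on $(\Gamma/\mu_K)\otimes\R \cong \R^\rho$ for which $\Gamma/\mu_K$ is a full-rank lattice whose shortest nonzero vector has length $\ge c_\delta$. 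A routine volume-packing estimate (disjoint balls of radius $c_\delta/2$ about the lattice points, all contained in a ball of radius $T + c_\delta/2$) then yields, for every $T \ge 0$,
\begin{equation*}
\#\{\gamma \in \Gamma : \log\mathrm{H}(\gamma) \le T\} \le w_K\left(1 + \frac{2T}{c_\delta}\right)^{\rho}.
\end{equation*}

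Now fix a parameter $N \ge 1$ and examine the $(N+1)^n$ products $\Psi(\bm k) = \alpha_1^{k_1}\cdots\alpha_n^{k_n}$ as $\bm k$ ranges over $\{0,1,\ldots,N\}^n$. By \eqref{eq:HeightSumBounds} and \eqref{eq:HeightPower}, each satisfies $\log\mathrm{H}(\Psi(\bm k)) \le \sum_i k_i \log\mathrm{H}(\alpha_i) \le nN\log H =: T$, so all of them lie in the set just counted. Since $\rho \le n-1$, comparing $(N+1)^n \ge N^n$ with $w_K(1 + 2nN\log H/c_\delta)^{\rho} \ll_{n,\delta} (N\log H)^{n-1}$ shows that, taking $N = \lceil A_0 (\log H)^{n-1}\rceil$ with $A_0 = A_0(n,D)$ large enough (and $\log H \ge 1$, which is assumed throughout), the number $(N+1)^n$ strictly exceeds the above count. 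Hence two distinct tuples $\bm k \ne \bm k'$ have $\Psi(\bm k) = \Psi(\bm k')$, that is, $\prod_i \alpha_i^{k_i - k'_i} = 1$ with $\bm k - \bm k' \ne 0$ and $\max_i |k_i - k'_i| \le N < A(\log H)^{n-1}$ for a suitable $A = A(n,D)$, which is the assertion. The only genuinely non-elementary input is the uniform Lehmer-type bound $c_\delta$, which is precisely what keeps the lattice $\Gamma/\mu_K$ well-spread and hence controls the point count; the main thing to watch is that the exponent of $\log H$ produced by the count is $\rho \le n-1$ (here is where multiplicative dependence is used) and that the inequality $(N+1)^n > w_K(1 + 2nN\log H/c_\delta)^{\rho}$ can be met with $N$ of order $(\log H)^{n-1}$.
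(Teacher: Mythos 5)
Your argument is correct in substance, but it is genuinely different from the paper's: the paper does not prove this lemma at all, it simply quotes it from van der Poorten and Loxton \cite{PL} (Theorem~1 there), whereas you give a self-contained pigeonhole/geometry-of-numbers proof whose only inputs are Northcott plus Kronecker (to get the uniform lower bound $c_\delta$ on the height of non-torsion elements of degree $\le \delta$ -- Lehmer-type results are indeed not needed), the bound $w_K \ll_\delta 1$, and the rank drop $\rho \le n-1$ coming from multiplicative dependence. The trade-off is the usual one: the citation route imports a sharper and more uniform statement (Loxton--van der Poorten control the exponents by products of the individual logarithmic heights, not just by $(\log H)^{n-1}$), while your route is elementary and makes transparent exactly where the exponent $n-1$ comes from. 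One point you assert but should justify: that the extension of $\log \mathrm{H}$ to $(\Gamma/\mu_K)\otimes\R$ is a genuine \emph{norm} rather than a seminorm -- if it had a nontrivial kernel, your balls would have infinite volume and the packing comparison would collapse. This is standard (embed $\Gamma$ in an $S$-unit group and use the logarithmic embedding, on whose image hyperplane the height is comparable to the $\ell^1$-norm), and it also follows from your own spacing bound: a seminorm vanishing on a line $L$ would, by Dirichlet approximation, be $< c_\delta$ at some nonzero lattice point near $L$, contradicting $\hat{\mathrm{h}}(x-y)\ge c_\delta$ for distinct lattice points. With that two-line remark (or a citation) added, and with the convention $\log H \ge 1$ that the paper assumes, your proof is complete and the constant $A$ depends only on $n$ and $D$ via $\delta \le D^n$, as required.
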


\begin{proof}
This follows from \cite[Theorem~1]{PL}.
\end{proof}

For positive integers $x$ and $y$, let $\psi(x,y)$ denote the number of positive integers not exceeding $x$ which have no prime factors greater than $y$. We have the following bound \cite[Theorem~1]{Bru}.

\begin{lemma} \label{lem:SmoothBound}
Let $2 < y \leq x$ be integers. Then
\begin{align*}
\psi(x, & y) \\
& = \exp \left(Z \left((1+O((\log y)^{-1}) + O((\log \log x)^{-1}) + O((u+1)^{-1}) \right) \right),
\end{align*}
where
$$
Z = \left( \log \left(1+\frac{y}{\log x} \right) \right) \frac{\log x}{\log y} + \left( \log \left(1+ \frac{\log x}{y} \right) \right) \frac{y}{\log y}
$$
and
$$
u = \frac{\log x}{\log y}.
$$
\end{lemma}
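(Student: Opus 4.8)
The plan is to recognize Lemma~\ref{lem:SmoothBound} as de Bruijn's classical uniform estimate for the counting function of $y$-smooth (friable) integers; consequently, in the paper it suffices to quote \cite[Theorem~1]{Bru} verbatim. For completeness I indicate the shape of the argument.

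\textbf{Upper bound via Rankin's trick.} For any real $\sigma > 0$, dropping the constraint $n \le x$ at the cost of the factor $(x/n)^\sigma \ge 1$ gives
$$
\psi(x,y) \le \sum_{\substack{n \ge 1 \\ P^+(n) \le y}} \left(\frac{x}{n}\right)^\sigma = x^\sigma \prod_{p \le y} \left(1 - p^{-\sigma}\right)^{-1},
$$
so that $\log\psi(x,y) \le g(\sigma) := \sigma\log x + \sum_{p \le y}\bigl(-\log(1 - p^{-\sigma})\bigr)$, and one minimizes $g$ over $\sigma$. Differentiating, the optimal $\sigma$ is the unique solution of $\log x = \sum_{p\le y}(\log p)/(p^\sigma - 1)$; estimating the prime sums by partial summation from the prime number theorem (Mertens-level precision suffices, since only logarithmic accuracy is needed), one checks after a somewhat involved but elementary computation that $g$ at this $\sigma$ equals $Z\bigl(1 + O((\log y)^{-1}) + O((\log\log x)^{-1}) + O((u+1)^{-1})\bigr)$, with $Z$ and $u$ as in the statement. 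Heuristically the first error term reflects the error in the prime number theorem up to $y$, the second the crude treatment of the higher prime powers $p^k$ with $k \ge 2$, and the third the discreteness of the optimization near the edge of the feasible range and the transition at $y \approx \log x$.

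\textbf{Lower bound.} For the matching lower bound one produces enough smooth numbers explicitly: counting squarefree products of a suitable number of primes drawn from a dyadic block below $y$ (or, in the bounded-$u$ range, invoking the Dickman--de Bruijn asymptotic $\psi(x,y) \sim x\rho(u)$, possibly together with Buchstab's identity to propagate ranges) yields $\psi(x,y) \ge \exp\bigl(Z(1 - o(1))\bigr)$ with the same shape of error terms, so the two bounds meet.

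\textbf{Main difficulty.} The delicate point is not either inequality in isolation but the \emph{uniformity} of the displayed error terms over the entire range $2 < y \le x$: one must simultaneously control the regime where $y$ is close to $x$ (where $\psi(x,y) \approx x$ and the first summand of $Z$ dominates), the regime where $y$ is only a fixed power of $\log x$ (where the second summand of $Z$ takes over), and everything in between, tracking how the saddle point $\sigma$ drifts as $u \to 0$ and as $u \to \infty$. It is precisely this uniform bookkeeping that makes de Bruijn's theorem worth quoting as a black box, and we do so.
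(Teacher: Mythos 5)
Your proposal matches the paper exactly: the lemma is simply de Bruijn's theorem, and the paper's entire justification is the citation to \cite[Theorem~1]{Bru}, which you correctly identify as sufficient. The accompanying sketch of Rankin's method and the lower bound is a reasonable outline of the classical argument but is not needed beyond the citation.
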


We also note a bound on algebraic numbers whose minimal polynomials have fixed leading and constant coefficients \cite[Lemma~2.5]{PSSS}.

\begin{lemma} \label{lem:leadCoeffCount}
Let $K$ be a number field of degree $D$, and let $u$ and $v$ be non-zero integers with $u > 0$. Then there is a positive number $c$, which depends on $K$, such that the number of elements $\alpha$ in $K$ of height at most $H$, whose minimal polynomial has leading coefficient $u$ and constant coefficient $v$, is at most $\exp(c \log H/ \log \log H)$.
\end{lemma}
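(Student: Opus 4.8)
The plan is to reduce the count to a question about lattice points on a variety cut out by fixing symmetric functions of the conjugates, and then bound this using a divisor-style estimate. Fix the degree $e \mid D$ of $\alpha$ over $\Q$ (there are only finitely many such $e$, so it suffices to bound each contribution). Write the minimal polynomial as $p(X) = u X^e + a_{e-1} X^{e-1} + \cdots + a_1 X + v$, so that the leading coefficient $a_e = u$ and constant coefficient $a_0 = v$ are prescribed. By the comparison \eqref{eq:NaiveHeightCompare} between the naive and Weil heights, every intermediate coefficient satisfies $|a_j| \leq (2H)^e$, so a priori $\alpha$ ranges over a set of size $O(H^{e(e-1)})$; the point is to do much better by exploiting the constraint on $v$.

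The key step is the classical observation that if $\alpha_1,\ldots,\alpha_e$ are the conjugates of $\alpha$, then $v/u = \pm \alpha_1 \cdots \alpha_e$, and by the definition of the Weil height each $|\alpha_i| \leq \mathrm{H}(\alpha)^e \leq H^e$. Thus $|v| \geq |u| \cdot \prod_i |\alpha_i|$, and more usefully, each conjugate $\alpha_i$ — hence, after clearing denominators, each $u\alpha_i$ — is an algebraic integer dividing $v$ (up to the unit $u^{e-1}$ issue, which one handles by passing to $u\alpha_i$ as roots of the monic polynomial $u^{e-1}p(X/u)$). So $u\alpha$ lies among the divisors of a fixed nonzero integer related to $u^{e-1}v$ inside the ring of integers of $K$; the number of such divisors of bounded norm in a number field of degree $D$ is $\exp(O(\log|v|/\log\log|v|))$ by the standard divisor bound in $\cO_K$ (or one can phrase it via the number of ideals of norm dividing a given integer). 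Since $|v| \leq (2H)^e \leq H^{O_D(1)}$, this gives $\exp(c\log H/\log\log H)$ for a constant $c$ depending only on $K$, which is exactly the claimed bound; summing over the finitely many possible degrees $e$ absorbs into the constant. (This is essentially the argument of \cite[Lemma~2.5]{PSSS}, which is cited, so one may alternatively just invoke it.)

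The main obstacle is the bookkeeping with the leading coefficient $u$: $\alpha$ itself need not be an algebraic integer, so one cannot directly say "$\alpha$ divides $v$." The clean fix is to work with $\beta := u\alpha$, which is an algebraic integer with $N_{K/\Q}(\beta) \mid u^{D}v \cdot (\text{something bounded})$ — more precisely $\beta$ is a root of the monic polynomial obtained from $p$, so its norm down to $\Q$ is a fixed integer of size $H^{O_D(1)}$. Then one must check that distinct $\alpha$ give distinct $\beta$ (immediate, since $u$ is fixed) and that the relevant divisor count in $\cO_K$ really is of the stated shape; the latter follows because the number of ideals of $\cO_K$ of norm at most $N$ is $O_K(N)$ while the number of divisors of a fixed ideal of norm $N$ is $N^{o(1)} = \exp(o(\log N))$, and $\log N \asymp_D \log H$. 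No step here is deep; the care is entirely in the reduction to the integral setting and the choice of which auxiliary integer plays the role of "the thing being divided."
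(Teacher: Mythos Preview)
The paper does not prove this lemma at all; it simply quotes \cite[Lemma~2.5]{PSSS}, as you yourself note at the end of your sketch. Your outline is the standard argument (and indeed essentially the one in \cite{PSSS}): pass to the algebraic integer $\beta=u\alpha$, whose monic minimal polynomial has constant term $\pm u^{e-1}v$, so that $N_{K/\Q}(\beta)$ is a fixed integer of size $H^{O_D(1)}$, and then invoke the divisor bound in $\cO_K$. The one step you leave slightly implicit is that fixing $N_{K/\Q}(\beta)$ only pins down the ideal $(\beta)$ among $\exp\bigl(O(\log H/\log\log H)\bigr)$ possibilities; to recover $\beta$ itself you must also count generators, i.e.\ units of $\cO_K$ of height $O_D(H)$, of which there are $O((\log H)^r)$ by Dirichlet's unit theorem, and this factor is absorbed into the stated bound.
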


\subsection{Proofs of Propositions \ref{prop:NaiveCount} and \ref{prop:RankCount}, and Theorem~\ref{thm:AICount}} In light of Lemma~\ref{lem:Siegel}, the proof of Proposition~\ref{prop:NaiveCount} is fairly straightforward.

\begin{proof}[Proof of Proposition~\ref{prop:NaiveCount}]
Write $f_i=g_i/h_i$, $g_i,h_i \in \overline{\Q}[X_1,\ldots,X_m]$, $i=1,\ldots,n$, and let $(\bm u_1, \ldots, \bm u_n) \in N_F(S^m)$. Then, where $\bm u_i = (u_{i1},\ldots,u_{im})$, we have $\mathrm{H}(u_{ij}) \leq H$, $i=1,\ldots,n$, $j=1,\ldots,m$. Using Lemma~\ref{lem:HeightTransform} and \eqref{eq:HeightPower}, for $i=1,\ldots,n$ we have
$$
\mathrm{H} \left( f_i(\bm u_i) \right) \leq 2 \mathrm{H} \left( g_i(\bm u_i) \right) \mathrm{H} \left( h_i(\bm u_i) \right) \leq H^C,
$$
where $C > 0$ is a constant depending only on $f_1,\ldots,f_n$. Therefore, by Lemma~\ref{lem:Siegel}, there exists a constant $A > 0$ depending only on $f_1,\ldots,f_n$, and integers $k_1,\ldots,k_n$, not all zero and with $\max_{1 \leq i \leq n} |k_i| \leq M := AC^{n-1} \left( \log H \right)^{n-1}$, such that 
$$
\prod_{i=1}^n f_i(\bm u_i)^{k_i} = 1,
$$
or equivalently
\begin{equation} \label{eq:MDRel}
\prod_{k_i > 0} g_i(\bm u_i)^{k_i} \prod_{k_i < 0} h_i(\bm u_i)^{k_i} - \prod_{k_i < 0} g_i(\bm u_i)^{k_i} \prod_{k_i > 0} h_i(\bm u_i)^{k_i} = 0.
\end{equation}
Fix a vector $(k_1,\ldots,k_n) \in [-M,M]^n$, then view \eqref{eq:MDRel} as a polynomial equation of degree at most $2M \left( \sum_{i=1}^n \deg f_i \right)$ in the $mn$ variables $u_{ij}$, $i=1,\ldots,n$, $j=1,\ldots,m$. Letting all but one of the $u_{ij}$ run freely over $S$, we see that this has $O(M|S|^{mn-1})$ solutions. Therefore
$$
N_F(S^m) \ll M^{n+1} |S|^{mn-1} \ll |S|^{mn-1} \left( \log H \right)^{n^2-1},
$$
as desired.
\end{proof}

We now proceed to prove our bounds for fixed multiplicative rank.

\begin{proof}[Proof of Proposition \ref{prop:RankCount}]
We fix an $n$-tuple of polynomials $F=( f_1,\ldots,f_n )$ in $\cO_K[X_1,\ldots,X_m]^n$, and let $c_1,c_2,\ldots$ denote positive numbers depending on $F$ and $K$. Suppose $\bm u = (\bm u_1,\ldots,\bm {u}_n) \in (\cO_K^m)^n$ has coordinates of height at most $H$, and is such that $(f_1(\bm{u}_1),\ldots, f_n(\bm{u}_n))$ is a multiplicatively dependent vector of rank $s$. Then there exist $s+1$ distinct integers $\ell_1, \ldots, \ell_{s+1} \in \{1, \ldots, n \}$, and non-zero integers $k_{\ell_1}, \ldots, k_{\ell_{s+1}}$ such that
$$
\prod_{i=1}^{s+1} f_{\ell_i}(\bm u_{\ell_i})^{k_{\ell_i}} = 1.
$$
By Lemma~\ref{lem:HeightTransform}, there is a constant $C > 0$ depending only on $f_1,\ldots,f_n$ such that $H(f_{\ell_i}(\bm u_{\ell_i})) \leq H^C$ for $1 \leq i \leq s+1$, and so by Lemma~\ref{lem:Siegel}, we can assume that
\begin{equation} \label{eq:RankSiegel}
\max \{ |k_{\ell_1}|, \ldots, |k_{\ell_{s+1}}| \} < c_1 (\log H)^s.
\end{equation}
Let $I,J \subseteq \{\ell_1, \ldots, \ell_{s+1}\}$ be respectively the sets of indices satisfying $k_{\ell_i} > 0$ and $k_{\ell_i} < 0$. Then we have the disjoint union $I \sqcup J = \{\ell_1,\ldots,\ell_{s+1} \}$, and
\begin{equation} \label{eq:PNDecomp1}
\prod_{i \in I} f_i(\bm u_i)^{|k_i|} = \prod_{j \in J} f_j(\bm u_j)^{|k_j|}.
\end{equation}
Either $I$ or $J$ has size at least $\lceil (s+1)/2 \rceil$, so assume $|I| \geq \lceil (s+1)/2 \rceil$. 
We will fix $\bm u_j \in \cB_K(H)^m$ for $j \in J$ and estimate the number of solutions of \eqref{eq:PNDecomp1} in $\bm u_i \in \cB_K(H)^m$, $i \in I$. Note that by \eqref{eq:RankSiegel}, the number of cases when we consider an equation of the form \eqref{eq:PNDecomp1} is at most
$$
\binom{n}{s+1} \left( 2c_1 (\log H)^s \right)^{s+1} B_K(H)^{m(n-|I|)},
$$
and by \eqref{eq:BKH}, this is at most
\begin{equation}
\label{eq:RkAIBd1}
c_2 H^{Dm(n-|I|)} (\log H)^{c_3}.
\end{equation}
Let $q_1,\ldots,q_t$ be the primes which divide
$$
\prod_{j \in J} N_{K/\Q}(f_j(\bm u_j)),
$$
where $N_{K/\Q}$ is the norm from $K$ to $\Q$. Since the height of $f_j(\bm u_j)$ is at most $H^C$, where $C>0$ is as we defined it before the equation \eqref{eq:RankSiegel}, we see from \eqref{eq:NaiveHeightCompare} that
\begin{equation} \label{eq:ProdNormBd}
\left| \prod_{j \in J} N_{K/\Q}(f_j(\bm u_j)) \right| \leq (2H^C)^{Dn},
\end{equation}
noting that $|J| \leq n$. Let $p_1, \ldots, p_k$ be the first $k$ primes, where $k$ satisfies
$$
p_1 \cdots p_t \leq \left| \prod_{j \in J} N_{K/\Q}(f_j(\bm u_j)) \right| < p_1 \cdots p_{k+1}.
$$
Let $T$ denote the number of positive integers up to $(2H^C)^{D}$, composed only of primes from $\{q_1, \ldots q_t \}$. Then $T$ is bounded above by the number of positive integers up to $(2H^C)^{D}$ composed of primes from $\{p_1,\ldots,p_k\}$. Then from \eqref{eq:ProdNormBd},
$$
\sum_{\text{prime } p \leq p_k} \log p \ll \log H,
$$
which, combined with the prime number theorem, gives
$$
p_k < c_4 \log H.
$$
Thus $T \leq \psi \left( (2H^C)^D, c_4 \log H \right)$, and so by Lemma~\ref{lem:SmoothBound},
\begin{equation} \label{eq:TBd}
T < \exp(c_5 \log H/ \log \log H).
\end{equation}
Therefore, if $\bm u_i$, $i \in I$ give a solution to \eqref{eq:PNDecomp1}, then $|N_{K/\Q}(f_i(\bm u_i))|$ is composed only of primes from $\{ q_1, \ldots, q_t \}$, and so $N_{K/\Q}(f_i(\bm u_i))$ is one of at most $2T$ integers of absolute value at most $(2H^C)^D$. Let $a$ be one such integer.

By Lemma~\ref{lem:leadCoeffCount}, the number of $\alpha \in \cB_K(H)$ for which $N_{K/\Q}(\alpha)=a$ is at most $\exp(c_6 \log H/ \log \log H)$. Given such an $\alpha$, the number of $\bm u_i \in \cB_K(H)^m$ such that $f_i(\bm u_i) = \alpha$ is $\ll H^{D(m-v(m,d))} (\log H)^\kappa$ by Theorem~\ref{thm:DimGrowth}, where $d := \min_{1 \leq i \leq n} \deg f_i$ and $v(m,d)$ is as defined in \eqref{eq:v}. Note that this holds trivially with $\kappa = 0$ if $m=1$.

Therefore, by \eqref{eq:TBd}, the number of $|I|$-tuples $(\bm u_i, i \in I)$ which give a solution to \eqref{eq:PNDecomp1} is at most
$$
H^{D|I|(m-v(m,d))} \exp( c_7 \log H/\log \log H).
$$
Recalling that $|I| \geq \lceil (s+1)/2 \rceil$, we see that
$$
N_{F,s}(\cB_K(H)^m) < H^{D(mn-\lceil (s+1)/2 \rceil v(m,d))} \exp( c_8 \log H/\log \log H),
$$
completing the proof.
\end{proof}

We conclude the paper with the proof of Theorem~\ref{thm:AICount}.

\begin{proof}[Proof of Theorem~\ref{thm:AICount}]
Subsequently, when using the symbol $\ll$, the implied constant will depend on $F$ and $K$. By Proposition~\ref{prop:RankCount}, we see that
$$
\sum_{s=2}^{n-1} N_{F,s}(\cB_K(H)^m) \ll H^{D(mn-2v(m,d))+o(1)},
$$
so we focus on controlling $N_{F,0}(\cB_K(H)^m)$ and $N_{F,1}(\cB_K(H)^m)$. In the rank 0 case, we can fix some index $i$ for which $f_i(\bm u_i)$ is a root of unity in $K$, and choose $\bm u_j$ arbitrarily in $\cB_K(H)^m$ for $j \neq i$. This can be done in $\ll H^{Dm(n-1)} (\log H)^{rm(n-1)}$ ways by \eqref{eq:BKH}. By Theorem~\ref{thm:DimGrowth}, for each of the $\ll 1$ roots of unity $\zeta \in \cO_K$, there are $\ll H^{D(m-v(m,d))} (\log H)^\kappa$ choices $\bm u_i \in \cB_K(H)^m$ for which $f_i(\bm u_i)=\zeta$ (again the $m=1$ case is trivial). We conclude that 
$$
N_{F,0}(\cB_K(H)^m) \ll H^{D(mn-v(m,d))} (\log H)^{rm(n-1)+\kappa}.
$$
Now, suppose $(\bm u_1, \ldots \bm u_n) \in N_{F,1}(\cB_K(H))$. Then by Lemma~\ref{lem:Siegel}, there exist $i \neq j$ such that $f_i(\bm u_i)^{k_1} = f_j(\bm u_j)^{k_j}$ for integers $k_i,k_j$, not both zero, with $|k_i|,|k_j| \ll \log H$. Without loss of generality, assume $k_i > 0$. Fix $i$, $j$, $k_j$ and $\bm u_\ell$ ($\ell \neq i$), for which there are $\ll H^{Dm(n-1)} (\log H)^{rm(n-1)+1}$ possibilities by \eqref{eq:BKH}. Then $f_i(\bm u_i) = \alpha$, where $\alpha$ is one of at most $k_i$ roots of $X^{k_i}-f_j(\bm u_j)^{k_j}$ in $\cO_K$. For each such $\alpha$, there are $\ll H^{D(m-v(m,d))} (\log H)^\kappa$ possibilities for $\bm u_i$ by Theorem~\ref{thm:DimGrowth}. Summing over $|k_i| \ll \log H$ gives $\ll H^{D(m-v(m,d))} (\log H)^{\kappa+2}$ total possibilities for $\bm u_i$. We conclude that
$$
N_{F,1}(\cB_K(H)^m) \ll H^{D(mn-v(m,d))} (\log H)^{rm(n-1)+\kappa+3},
$$
and the result follows, noting again by Theorem~\ref{thm:DimGrowth}, that if additionally $m>2$, $d \geq 5$ and the top homogeneous component of each $f_i$ is $2$-irreducible, then all instances of $\kappa$ can be removed.
\end{proof}

\Address

\end{document}